\newcommand{\bbE}{\mathbb{E}}
\newcommand{\bbP}{\mathbb{P}}
\newcommand{\bbR}{\mathbb{R}}
\newcommand{\ul}{\underline}
\newcommand{\cD}{\mathcal D}
\newcommand{\cA}{\mathcal A}
\newcommand{\cC}{\mathcal C}
\newcommand{\cB}{\mathcal B}
\newcommand{\cG}{\mathcal G}
\newcommand{\cS}{\mathcal S}
\newcommand{\cN}{\mathcal N}
\newcommand{\cQ}{\mathcal Q}
\newcommand{\rmd}{{\rm d}}
\newcommand{\rme}{{\rm e}}
\newcommand{\wt}{\widetilde}
\newtheorem{theorem}{Theorem}[section]
\newtheorem{lem}[theorem]{Lemma}
\newtheorem{prop}[theorem]{Proposition}
\newtheorem{rem}[theorem]{Remark}
\numberwithin{equation}{section}
\title{Decorated Random Walk Restricted to Stay Below a Curve (Supplement Material)}
\author
{Aser Cortines\thanks{aser.cortinespeixoto@math.uzh.ch, lhartung@uni-mainz.de,  oren.louidor@gmail.com.} \\Universit\"at Z\"urich \and Lisa Hartung\footnotemark[1]\\ 
Universit\"at Mainz\and Oren Louidor\footnotemark[1]\\Technion, Israel}
\date{}
\begin{document}
\maketitle

\begin{abstract}
We consider a one dimensional random-walk-like process, whose steps are centered Gaussians with variances which are determined according to the sequence of arrivals of a Poisson process on the line. This process is decorated by independent random variables which are added at each step, but do not get accumulated. We study the probability that such process, conditioned to form a bridge, stays below a curve which grows at most polynomially fast away from the boundaries, with exponent less than one half. Both bounds and asymptotics are derived. These estimates are used in~\cite{cortines2017structure}, to which this manuscript is a supplement.
\end{abstract}

\section{Introduction and statement of results}
\subsection{Introduction}
This supplement includes estimates for the probabilities of events involving a certain random-walk-type process restricted to stay below a ``barrier'' curve. These estimates are used as inputs to the proofs in~\cite{cortines2017structure}, to which this manuscript is a supplement, where the detailed structure of extreme level sets of branching Brownian motion is studied. The particular setup, which will be described shortly, is encountered naturally when one observes the process from the point of view of a particle, which reaches an extremal height (see Sub-section 1.3.1 in~\cite{cortines2017structure} for more information concerning the context of the results presented here). 

Although barrier estimates for Brownian motion and random walks are well known, our setup and the type of estimates we need -- both rather specific, are not addressed, as out-of-the-box statements, by the existing literature (at least to the extent of our knowledge). Thus, despite eventually recovering, up to constants, the same bounds and asymptotics as, say, a Brownian bridge restricted to stay negative, all estimates here are derived almost from scratch.

We only make a modest attempt to generalize the results to a setting larger than that which is needed in~\cite{cortines2017structure}. We feel that any reasonable generalization will still possess the particularities of the context at hand, and as such will be of little use anywhere else. Nevertheless, although we could have done with much less, we treat here all barrier curves which grow polynomially-fast away from the boundary, with exponent less than one half. This is almost an optimal assumption, as even in the case of Brownian motion, the probability of staying below a curve growing polynomially with exponent greater than one half, exhibits very different asymptotics compared to that of just staying negative. 

Our proofs essentially follow the strategy of reducing the problem at hand to that of a Brownian motion staying below such polynomial curves. Surprisingly enough, even for the case of Brownian motion, we could not find satisfactory upper and lower bounds, which cover the full range of exponents we want (c.f.~\cite{biskup2015full, Bramson1978maximal}, the exponent in the lower bound in~\cite{biskup2015full}, does go all the way to one half). For this reason and since, thanks to the advice of the referee, short proofs for such estimates are available following Bramson's argument in~\cite{B_C}, we have included these statements and their proofs in Subsection~\ref{ss:1}. We remark that Bramson's method was already put in use before (e.g., in~\cite{ABK_G} to obtain localization results for the path taken by extreme particles).

\subsection{Setup and results}
\label{ss:Setup}
The setting is as follows. Let $W=(W_u :\: u \geq 0)$ be a standard one dimensional Brownian motion. Given $x,y \in \bbR$  and $0 \leq s < t$, we shall denote by $\bbP_{s,x}^{t,y}$ and
$\bbP_{s,x}$ the conditional distribution $\bbP(\cdot \,|\, W_s = x \,,\,\, W_t = y)$
and $\bbP(\cdot \,|\, W_s = x)$ respectively (if $s=0$ we assume that $W_0$ was $x$ in the first place). On the same probability space, let us suppose also the existence of a collection $Y=(Y_u :\: u \geq 0)$ of independent random variables, which is also independent of $W$. These random variables, which will be referred to as ``decorations'', satisfy 
\begin{equation}
\label{e:A1}
\forall u,z \geq 0 :\ \ 
\bbP \big(|Y_u| \geq z) \leq \delta^{-1} \rme^{-\delta z} 
\end{equation}
for some $\delta > 0$.

The third collection of random variables defined on this space, comes in the form of a Poisson point process (PPP) on $\bbR$:
\begin{equation}
\label{e:A2}
\cN \sim \text{PPP}(\lambda \rmd x) \,,
\end{equation}
for some $\lambda > 0$. This process is assumed to be independent of $W$ and $Y$ and we denote by $\sigma = (\sigma_k :\: k \geq 1)$ the collections of all atoms of $\cN$, enumerated in increasing order. 

We will be interested in controlling the probability that the process $W-Y$ evaluated at all points $\sigma \cap [0,t]$ stays below a curve $\gamma_t = (\gamma_{t,u} :\: u \geq 0)$, satisfying for all $0 \leq u \leq t$,
\begin{equation}
\label{e:A3}
-\delta^{-1} \leq \gamma_{t,u} \leq \delta^{-1} \big(1+ (\wedge^t(u))^{1/2-\delta}\big) 
\quad, \qquad
\wedge^t(u) := u \wedge (t-u) \,,
\end{equation}
where $\delta \in (0,1/2)$ (to avoid using too many parameters we will use one $\delta$ in multiple conditions). The first statement is an upper bound. In this case, we might as well use the bounding function as the barrier curve itself.
\begin{prop}
\label{p:A2}
Suppose that $W, Y, \cN$ are defined as above with respect to some $\lambda > 0$ and $\delta \in (0,1/2)$. Then there exists $C = C(\lambda, \delta)$ such that for all $t \geq 0$, $x,y \in \bbR$,
\begin{equation}
\label{e:2.11}
\begin{multlined}
\bbP_{0,x}^{t,y} \Big( \max_{k :\:\sigma_k \in [0,t]} \big(W_{\sigma_k} - \delta^{-1} \big(1+(\wedge^t(\sigma_k))^{1/2-\delta}\big) - Y_{\sigma_k}\big) \leq 0 \Big) \\
\leq C \frac{(x^-+1)(y^-+1)}{t} \,,
\end{multlined}
\end{equation}

Moreover, there exists $C'= C'(\lambda, \delta)$ such that
for all $t \geq 0$ and all $x,y \in \bbR$ such that $xy \leq 0$,

\begin{equation}
\label{e:2.12}
\begin{multlined}
\bbP_{0,x}^{t,y} \Big( \max_{k:\: \sigma_k \in [0,t]} \big(W_{\sigma_k} - \delta^{-1} \big(1+(\wedge^t(\sigma_k))^{1/2-\delta}\big) - Y_{\sigma_k}\big) \leq 0 \Big) \\
\leq C' \frac{\big(x^- + \rme^{-\sqrt{2\lambda}(1-\delta) x^+}\big) 
\big(y^- + \rme^{-\sqrt{2\lambda}(1-\delta) y^+}\big)}{t} 
\exp\Big(\tfrac{(y-x)^2}{2t}\Big) \,.
\end{multlined}
\end{equation}
\end{prop}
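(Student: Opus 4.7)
The plan is to reduce the discrete, decorated barrier event to a continuous-time barrier estimate for a plain Brownian bridge. The needed input -- that a Brownian bridge from $x$ at time $0$ to $y$ at time $t$ stays below the curve $\delta^{-1}(1+(\wedge^t(u))^{1/2-\delta})$ with probability at most $C(x^-+1)(y^-+1)/t$ -- is the estimate supplied in Subsection~\ref{ss:1}, and I take it as a black box.

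For \eqref{e:2.11}, I would proceed as follows. First, since each $Y_u$ has exponential tails of rate $\delta$, a union bound over the $O(\lambda t)$ Poisson atoms in $[0,t]$ shows that $\max_{k:\sigma_k\in[0,t]} Y_{\sigma_k}^-$ is typically of order $\log(\lambda t)$, and the contribution from the complementary event can be absorbed after truncating $Y$. Next, I would apply the Markov property at the first and last Poisson atoms $\sigma_\star,\sigma^\star$ in $[0,t]$, conditioning on $\sigma_\star,\sigma^\star$ and on $W_{\sigma_\star},W_{\sigma^\star}$. On the event in question one has the boundary constraints $W_{\sigma_\star}\leq\gamma_{t,\sigma_\star}+Y_{\sigma_\star}$ and $W_{\sigma^\star}\leq\gamma_{t,\sigma^\star}+Y_{\sigma^\star}$, and on $[\sigma_\star,\sigma^\star]$ the process $W$ is a Brownian bridge. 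The interior discrete constraints at Poisson atoms can then be lifted to a continuous-time barrier condition at the cost of absorbing constants into $C$, because the density of $\cN$ and the bounded oscillation of $\gamma_{t,\cdot}$ on unit intervals ensure that controlling the bridge at atoms separated by $O(1/\lambda)$ essentially controls it in between. Applying the continuous bridge estimate yields a bound of the form $C(W_{\sigma_\star}^-+1)(W_{\sigma^\star}^-+1)/(\sigma^\star-\sigma_\star)$; integrating over $\sigma_\star,\sigma^\star$ (essentially $\mathrm{Exp}(\lambda)$) and over $W_{\sigma_\star},W_{\sigma^\star}$ against the Gaussian transitions from $x$ and to $y$ produces \eqref{e:2.11} after routine computation.

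For the improvement \eqref{e:2.12}, the extra decay when $xy\leq 0$ comes from the short-time behaviour at an endpoint where $W$ starts on the wrong side of the barrier. Say $x>0$; the event forces $W_{\sigma_\star}\leq\gamma_{t,\sigma_\star}+Y_{\sigma_\star}$, i.e., $W$ must drop from $x$ to order $1$ by the $\mathrm{Exp}(\lambda)$ time $\sigma_\star$. Using the classical identity $\bbE[\rme^{-\lambda T_{-a}}]=\rme^{-\sqrt{2\lambda}\,a}$ for the first hitting time of $-a$ by a Brownian motion from $0$ -- equivalently, that $W_{\sigma_\star}-x$ is Laplace-distributed with rate $\sqrt{2\lambda}$ -- one obtains $\bbP(W_{\sigma_\star}\leq\gamma_{t,\sigma_\star}+Y_{\sigma_\star})\lesssim\rme^{-\sqrt{2\lambda}(1-\delta)x}$ for large $x$, the $(1-\delta)$ factor absorbing the mild losses from convolving the Laplace tail against the exponential tail of $Y_{\sigma_\star}$ and from the polynomial growth of $\gamma_{t,\sigma_\star}$. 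A symmetric computation at $\sigma^\star$ supplies the exponential factor in $y^+$, while the prefactor $\rme^{(y-x)^2/(2t)}$ appears as the inverse Gaussian bridge density when we switch between the conditional measure $\bbP_{0,x}^{t,y}$ and unconditional estimates.

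The principal difficulty is that the barrier is imposed only at the random Poisson atoms, while the input from Subsection~\ref{ss:1} is a continuous-time statement. The heart of the proof lies in showing that this discretization costs only a multiplicative constant, a step requiring both the density of $\cN$ and the $\delta^{-1}$ slack built into $\gamma_{t,u}$. This is further complicated by the independent decorations $Y$, which render each effective barrier height $\gamma_{t,\sigma_k}+Y_{\sigma_k}$ random; the worst-case decoration must be controlled via its exponential tail together with a careful union bound over the $O(\lambda t)$ Poisson atoms in $[0,t]$.
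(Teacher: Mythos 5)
Your plan for~\eqref{e:2.11} is broadly in the spirit of the paper but glosses over the two principal technical difficulties. First, a small but telling slip: for an upper bound it is the \emph{positive} tail of the decorations that matters, since a large positive $Y_{\sigma_k}$ raises the effective barrier and enlarges the confinement event; truncating $\max_k Y_{\sigma_k}^-$ achieves nothing. Second, the assertion that the Poisson atoms are separated by $O(1/\lambda)$ is not uniformly true: among $O(\lambda t)$ arrivals the largest gap is of order $\log t$, and much larger gaps occur on exponentially rare events. The paper handles this by introducing the classes $\wt{\cS}(\delta,t,M)$, proving a deterministic-sequence estimate with an $M^2$ factor, and then integrating out both the decorations and the spacing by summing over $M$. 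Moreover, passing from a discrete constraint at the atoms to a continuous-time barrier is nontrivial precisely in the upper-bound direction, since controlling $W$ at the atoms only is a strictly weaker event; the paper's first lemma treats this by writing the discrete-event probability as a continuous-barrier probability divided by a conditional probability which is bounded away from zero via an oscillation estimate. Without some such argument, the claim that discretization ``costs only a multiplicative constant'' is unjustified.

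The argument you propose for~\eqref{e:2.12} is incorrect. Looking only at the constraint at the first atom $\sigma_\star$ cannot give the claimed rate. It is true that $W_{\sigma_\star}-x$ is Laplace with parameter $\sqrt{2\lambda}$, so that $\bbP(W_{\sigma_\star}\leq 0)\asymp\rme^{-\sqrt{2\lambda}x}$, but $Y_{\sigma_\star}$ is only assumed to have an exponential tail of rate $\delta$, and convolving the two yields $\bbP\big(W_{\sigma_\star}\leq\gamma_{t,\sigma_\star}+Y_{\sigma_\star}\big)\asymp\rme^{-\min(\sqrt{2\lambda},\,\delta)\,x}$. For $\delta$ small compared to $\sqrt{2\lambda}(1-\delta)$ (take $\delta=0.1$, $\lambda=1$: $\delta=0.1$ versus $\sqrt{2}\cdot 0.9\approx 1.27$) this is far larger than $\rme^{-\sqrt{2\lambda}(1-\delta)x}$, and the $(1-\delta)$ factor does not absorb the loss. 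The mechanism you are missing is that \emph{all} atoms occurring before the walk first drops to a low level must carry very large decorations, not just the first one. The paper conditions on the first hitting time $\tau$ of the level $x\delta/2$: on $\{\tau\geq s\}$, every $\sigma_k\in[0,s]$ forces $Y_{\sigma_k}\geq x\delta/2 - O(x^{1/2-\delta})$, which has probability at most $\delta/2$ per atom and therefore costs $\bbE(\delta/2)^{\cN([0,s])}=\rme^{-\lambda s(1-\delta/2)}$ overall. Balancing this against the Gaussian cost $\rme^{-x^2(1-\delta/2)^2/(2s)}$ of bringing $\tau$ down to $s$ gives, via a Laplace-type optimization, the rate $\sqrt{2\lambda(1-\delta/2)}\,(1-\delta/2)\geq\sqrt{2\lambda}(1-\delta)$. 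A single-atom analysis is structurally unable to reach it.
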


\medskip
For an asymptotic statement, we naturally need to control the limiting behavior of both the decorations and the family of curves $\gamma = (\gamma_t)_{t \geq 0}$. For the former we assume that
\begin{equation}
\label{e:A2.5}
Y_u \overset{u \to \infty} \Longrightarrow Y_\infty \,,
\end{equation}
in the sense of weak convergence of distributions, where $Y_\infty$ is some real random variable. For the latter, we require that for all $u \geq 0$,
%for some random variable $Y_\infty$. For the latter, we require that for all $u \geq 0$,
\begin{equation}
\label{e:A4}
\gamma_{t,u} \overset{t \to \infty} \longrightarrow \gamma_{\infty, u}
\ , \ \ 
\gamma_{t,t-u} \overset{t \to \infty}  \longrightarrow \gamma_{\infty, -u} \,,
\end{equation}
where $\gamma_{\infty, u}, \gamma_{\infty, -u} \in \bbR_+$ (with slight abuse, we shall use the notation $\gamma_{\infty, -0}$ for the limit $\lim_{t \to \infty} \gamma_{t,t}$). 

We then have (henceforth ``$\sim$'' denotes asymptotic equivalence):
\begin{prop}
\label{p:A3}
Suppose that $W, Y, \cN$ and $\gamma$ are defined as above with respect to some $\lambda>0$ and $\delta \in (0,1/2)$. Then there exists non-increasing positive functions $f,g: \bbR \to (0,\infty)$ depending on $\lambda$, $\gamma$ and $Y$, such that 
\begin{equation}
\label{e:A6}
\bbP_{0,x}^{t,y} \Big( \max_{k :\: \sigma_k \in [0,t]} \big(W_{\sigma_k} - \gamma_{t,\sigma_k} - Y_{\sigma_k} \big) \leq 0 \Big)
\sim 2 \frac{f(x)g(y)}{t} 
\ \ \text{as } t \to \infty \,,
\end{equation}
uniformly in $x,y$ satisfying $x, y \leq 1/\epsilon$ and $(x^- + 1)(y^- +1) \leq t^{1-\epsilon}$, for any fixed $\epsilon > 0$. Moreover, 
\begin{equation}
\label{e:A6.1}
\lim_{x \to \infty} \frac{f(-x)}{x} = \lim_{y \to \infty} \frac{g(-y)}{y} = 1 \,.
\end{equation}
\end{prop}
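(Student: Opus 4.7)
The plan is a three-window decomposition. Fix a large $r$ and split $[0,t]$ into $[0,r]$, $[r,t-r]$, $[t-r,t]$. Conditioning on $(W_r,W_{t-r})=(u,v)$ under the bridge measure $\bbP_{0,x}^{t,y}$ and using the Markov property together with the joint Gaussian density, the probability in~\eqref{e:A6} factors as a double integral in $u,v$ of the product of three barrier probabilities weighted by ratios of transition densities. Truncating the $u,v$ integration to a fixed compact set introduces an error that is negligible compared to $1/t$ uniformly in $t$, as follows from Proposition~\ref{p:A2}.

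The key input is the middle-window asymptotic
\begin{equation*}
\bbP_{r,u}^{t-r,v}\!\Big(W_{\sigma_k}-\gamma_{t,\sigma_k}-Y_{\sigma_k}\leq 0\text{ for all }\sigma_k\in[r,t-r]\Big)\sim \frac{2u^-v^-}{t},
\end{equation*}
uniformly on compacts in $u,v$. The upper bound follows from Proposition~\ref{p:A2}. For the matching lower bound I would follow a Bramson-type argument as in Subsection~\ref{ss:1}, comparing the event to the unconstrained Brownian-bridge ballot probability $1-\exp(-2uv/(t-2r))\sim 2u^-v^-/t$; the polynomial growth of $\gamma$ with exponent $<1/2$ in~\eqref{e:A3} ensures the barrier shape is asymptotically invisible, and a union bound using the exponential tail~\eqref{e:A1} on the decorations and the rate $\lambda$ of~\eqref{e:A2} on the Poisson atoms controls these perturbations. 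For the outer windows, define
\begin{equation*}
f_r(x):=\bbE_{0,x}\!\left[W_r^-\,\mathbf{1}\big\{W_{\sigma_k}-\gamma_{\infty,\sigma_k}-Y_{\sigma_k}\leq 0\text{ for }\sigma_k\in[0,r]\big\}\right],
\end{equation*}
and $g_r(y)$ symmetrically using $\gamma_{\infty,-\cdot}$ from~\eqref{e:A4}; this replacement of $\gamma_t$ by its limit $\gamma_\infty$ is valid as $t\to\infty$ with $r$ fixed. Applying optional stopping to the martingale $W_{r\wedge\tau}$, with $\tau$ the first barrier-crossing time, combined with the exponential tail~\eqref{e:A1} on the overshoot, shows that $(f_r(x))_r$ is Cauchy in $r$ and admits a positive limit $f(x)$; similarly for $g$. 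Assembling the three-window decomposition and substituting the middle-window asymptotic gives~\eqref{e:A6} with $f_r,g_r$ in place of $f,g$, and passing $r\to\infty$ finishes the proof.

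For~\eqref{e:A6.1}, observe that starting from $-x$ with $x$ large, the probability of the barrier event on $[0,r]$ tends to $1$ uniformly in $r$ by a Gaussian tail bound combined with~\eqref{e:A1}--\eqref{e:A2}, so $f_r(-x)/x=\bbE_{0,-x}[W_r^-]/x+o_x(1)\to 1$ as $x\to\infty$, and the same holds after passing to $r=\infty$. I expect the main obstacle to be the lower bound of the middle-window asymptotic: one must simultaneously control the polynomially growing barrier, the Poisson-random evaluation times, and the supremum of the decorations (which grows like $\log t$), all while preserving a multiplicative $1+o(1)$ correction to the free-bridge ballot probability $2u^-v^-/t$. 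The remaining steps---three-window decomposition, Cauchy convergence of $f_r$, and the linear tail of $f$---are comparatively routine given Proposition~\ref{p:A2} and Bramson's method.
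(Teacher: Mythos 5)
Your three-window split and your definitions of $f_r,g_r$ via $\bbE_{0,x}[W_r^-\,;\,\cQ_\infty(0,r)]$ do coincide with the paper's, but the core of your argument---the middle-window asymptotic---contains a genuine error, and the way you propose to fix it would not work.

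The claim
\[
\bbP_{r,u}^{t-r,v}\Big(W_{\sigma_k}-\gamma_{t,\sigma_k}-Y_{\sigma_k}\leq 0 \text{ for all } \sigma_k\in[r,t-r]\Big)\sim \frac{2u^-v^-}{t}
\]
is false for fixed $u,v,r$. The event in question is exactly the decorated walk staying below a curve (and being decorated), so by the very proposition you are proving this probability is $\sim 2\hat f(u)\hat g(v)/t$ where $\hat f,\hat g$ are the analogues of $f,g$ associated to the curve and decorations on the shifted interval; these are \emph{not} the functions $u\mapsto u^-$, $v\mapsto v^-$. The Bramson argument in Subsection~\ref{ss:1} only yields Propositions~\ref{p:3.6} and~\ref{p:3.7}, i.e.\ matching \emph{orders} with unspecified constants $C\neq C'$, and cannot produce the sharp prefactor $2$ here. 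The paper avoids this trap entirely: instead of trying to compute the decorated-barrier probability on $[s,t-s]$, it invokes the entropic repulsion bound (Lemma~\ref{l:10.7}) to show that, up to an error $O((x^-+1)(y^-+1)/(ts^{1/4}))$ which is negligible after $s\to\infty$, the event $\cQ_t(0,t)$ may be replaced by $\cQ_t(0,s)\cap\cQ_t(t-s,t)\cap\{\max_{[s,t-s]}W_u\leq 0\}$, where the middle factor is the \emph{plain} Brownian bridge staying negative. Only then does the reflection-principle formula $1-\exp(-2z^-w^-/(t-2s))\sim 2z^-w^-/t$ apply. Without this step the prefactor $2$ is unjustified.

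A second, smaller gap: you assert that $(f_r(x))_r$ is Cauchy via optional stopping of $W_{r\wedge\tau}$. This is not a martingale argument that directly applies ($W_r^-$ is not a martingale, and the event $\cQ_\infty(0,r)$ does not arise as a stopping-time event in the right way), and you do not explain how it controls the increments $f_{r'}(x)-f_r(x)$. The paper instead proves only that the product $f_s(x)g_s(y)$ has a limit (via the Cauchy property implied by Lemma~\ref{l:10.9}) and then deduces separate convergence of $f_s$ and $g_s$ by comparing with the symmetrised problem $Y'_u:=Y_\infty$, $\gamma'_{t,u}:=\gamma_{\infty,-\wedge^t(u)}$, for which $f'_s=g'_s=g_s$ and hence $g_s^2$ (and thus $g_s$) converges. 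You would need to supply an argument of this kind, or a correct direct proof that $f_r$ is Cauchy.

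The tail computation for~\eqref{e:A6.1} is essentially the paper's and is fine, but it rests on the (unproved, in your sketch) facts that $f_s\to f$ pointwise and that this convergence interacts well with the $x\to\infty$ limit, which the paper handles through the uniform-in-$x$ error estimate~\eqref{e:301}.
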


\begin{rem}[Monotonicity w.r.t. boundary conditions] \label{r:monotonicity} 
Notice that if $x \leq x'$ and $y \leq y'$, then for all $t \geq 0$ we have
\begin{equation}\label{e:montonicity}
\begin{multlined}
\bbP_{0,x}^{t,y} \Big( \max_{k :\: \sigma_k \in [0,t]} \big(W_{\sigma_k} - \gamma_{t,\sigma_k} - Y_{\sigma_k} \big) \leq 0 \Big) \\
\geq \bbP_{0,x'}^{t,y'} \Big( \max_{k :\: \sigma_k \in [0,t]} \big(W_{\sigma_k} - \gamma_{t,\sigma_k} - Y_{\sigma_k} \big) \leq 0 \Big).
\end{multlined}
\end{equation} 
Indeed, one can pass from a Brownian bridge from $x$ to $y$ to a Brownian bridge from $x'$ to $y'$ replacing $W_s$ by
$W_s - \Big( \frac{s}{t} (y'-y) +(x'-x) \big(1-\frac{s}{t} \big) \Big)$
inside the probability brackets. Since the above interpolation function is positive for every $s \in [0,t]$ we can simply lower bound it by zero to obtain \eqref{e:montonicity}. In particular, it is straightforward to show that if the convergence from Proposition~\ref{p:A3} holds, then both $f$ and $g$ are non-increasing.   
\end{rem}

We also need to know that the above asymptotics is continuous (in the sense specified below) in
$Y$ and $\gamma$. To this end for each $r \geq 0$, let $Y^{(r)}$ be a collection of random variables as $Y$ above and $\gamma^{(r)}$ be a function as $\gamma$ above, satisfying~\eqref{e:A1} and~\eqref{e:A3} uniformly for all $r \geq 0$ with some $\delta \in (0,1/2)$. Suppose that~\eqref{e:A2.5} holds for $Y^{(r)}_u$ with the limit denoted by $Y_\infty^{(r)}$ and that~\eqref{e:A4} holds with the limits denoted by $\gamma^{(r)}_{\infty, u}$ and $\gamma^{(r)}_{\infty,- u}$. 
Then
\begin{prop}
\label{p:A4}
Suppose that $W, Y, \cN, \gamma$ and $Y^{(r)}, \gamma^{(r)}$ for $r \geq 0$ are defined as above with respect to some $\lambda>0$, $\delta \in (0,1/2)$. Let $f^{(r)}$, $g^{(r)}$ be the functions $f$, $g$ respectively given in Proposition~\ref{p:A3} applied to $W, Y^{(r)}, \cN$ and $\gamma^{(r)}$. Assume that
\begin{equation}
\label{e:543}
\forall u \in [0,\infty]\!:\, Y^{(r)}_u \overset{r \to \infty}{\Longrightarrow} Y_u
\quad , \qquad
\forall u \in [0,\infty)\!:\,
\gamma^{(r)}_{\infty,\pm u} \overset{r \to \infty}{\longrightarrow} \gamma_{\infty, \pm u} \,.
\end{equation}
Then for all $x \in \bbR$,
\begin{equation}
f^{(r)}(x) \overset{r \to \infty} \longrightarrow f(x) 
\ , \quad
g^{(r)}(x) \overset{r \to \infty} \longrightarrow g(x) \,,
\end{equation}
with $f,g$ given by Proposition~\ref{p:A3} applied to $W, Y, \cN$ and $\gamma$. Moreover, if $Y_\infty^{(r)} = Y_\infty$ and $\gamma_{\infty, -u}^{(r)} = \gamma_{\infty, -u}$ for all $r \geq 0$ and $u \geq 0$, then $g^{(r)}(x) = g(x)$ for all $r \geq 0$.	
\end{prop}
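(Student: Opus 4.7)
The plan is to dispose of the moreover claim by a time-reversal argument, and then deduce the pointwise convergence by a Markov splitting at times $s, t-s$ that isolates the dependence on bounded-time data.

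\textbf{Moreover via time-reversal.} The reflection $u \mapsto t-u$ preserves both the law of the Brownian bridge $\bbP_{0,x}^{t,y}$ (mapping it to $\bbP_{0,y}^{t,x}$ applied to the reversed trajectory) and the law of $\cN \cap [0,t]$. Under this reflection the event in~\eqref{e:A6} becomes the analogous event for a \emph{reversed system} with barrier $\tilde\gamma_{t,u} := \gamma_{t,t-u}$ and decorations $\tilde Y_u := Y_{t-u}$; this reversed system satisfies \eqref{e:A1}--\eqref{e:A4} with the same $\lambda, \delta$ and has boundary limits $\tilde\gamma_{\infty,u} = \gamma_{\infty,-u}$ and $\tilde Y_\infty = Y_\infty$. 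Applying Proposition~\ref{p:A3} to the reversed system yields an asymptotic $\sim 2\tilde f(y)\tilde g(x)/t$ for the same probability; equating with $\sim 2f(x)g(y)/t$ and invoking the uniqueness forced by~\eqref{e:A6.1} gives $\tilde f \equiv g$. The reversed-system's $f$-function $\tilde f$ is a left-boundary object, so it depends only on $(\tilde\gamma_{\infty,u})_{u\geq 0}$ and the $t\to\infty$ marginal limit of $\tilde Y_u$ -- namely $(\gamma_{\infty,-u})_{u\geq 0}$ and $Y_\infty$ in original terms. Hence $g$ depends only on the right-boundary data, which is the moreover claim.

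\textbf{Splitting for pointwise convergence.} Fix $x$ and an auxiliary $y_0$. For $s$ large and $t \gg s$, the Markov property gives
\begin{equation*}
\bbP_{0,x}^{t,y_0}\big(A^{(r)}_{[0,t]}\big)
= \bbE_{0,x}^{t,y_0}\!\Big[\mathbf{1}_{A^{(r)}_{[0,s]}} \cdot \bbP_{s,W_s}^{t-s,W_{t-s}}\big(A^{(r)}_{[s,t-s]}\big) \cdot \mathbf{1}_{A^{(r)}_{[t-s,t]}}\Big],
\end{equation*}
where $A^{(r)}_I := \{W_{\sigma_k} - \gamma^{(r)}_{t,\sigma_k} - Y^{(r)}_{\sigma_k} \leq 0 \text{ for all } \sigma_k \in I\}$. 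Applying Proposition~\ref{p:A3} to the time-shifted middle factor (which satisfies \eqref{e:A1}--\eqref{e:A4} with the same $\lambda$ and a mildly adjusted $\delta$) writes it as $\sim 2 \tilde f^{(r)}_{\!s}(W_s)\tilde g^{(r)}_{\!s}(W_{t-s})/(t-2s)$ as $t\to\infty$. For fixed $s$, the indicator $\mathbf{1}_{A^{(r)}_{[0,s]}}$ and the function $\tilde f^{(r)}_{\!s}$ depend, after this first $t\to\infty$ limit, only on $(Y^{(r)}_u)_{u\leq s}$ and $(\gamma^{(r)}_{\infty,u})_{u\leq s}$. The independence of decorations at distinct times promotes the pointwise weak convergence in~\eqref{e:543} to joint weak convergence on the a.s.-finite set $\cN \cap [0,s]$; combined with the deterministic convergence $\gamma^{(r)}_{\infty,u}\to\gamma_{\infty,u}$, the left-edge contribution converges in law as $r\to\infty$. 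A symmetric argument handles the right edge, with the moreover claim guaranteeing that $\tilde g^{(r)}_{\!s}$ depends only on right-boundary data that converges by~\eqref{e:543}.

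\textbf{Conclusion and main obstacle.} Matching the splitting identity with the asymptotic $2 f^{(r)}(x)g^{(r)}(y_0)/t$ from Proposition~\ref{p:A3}, and passing to the limits in the order $t\to\infty$, $r\to\infty$, $s\to\infty$, yields $f^{(r)}(x)\to f(x)$; the convergence $g^{(r)}(x)\to g(x)$ then follows by the time-reversal correspondence of Part~1. The central technical hurdle is the three-way interchange of limits: the $o_t(1)$ error in Proposition~\ref{p:A3} must be controlled uniformly in $r$ and $s$. This uniformity ultimately rests on the explicit bounds in Proposition~\ref{p:A2}, whose constants depend only on the fixed $\lambda, \delta$ and therefore bound all systems $(Y^{(r)}, \gamma^{(r)})$ simultaneously, allowing the error terms to be absorbed when the parameter $s$ is chosen to grow slowly with $r$.
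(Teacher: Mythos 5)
Your proof has a genuine gap in the splitting step, and the overall structure misses the key mechanism the paper relies on.

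Your ``moreover'' argument via time reversal is correct in spirit, though the paper's version is immediate: the function $g$ is defined in~\eqref{e:795} as $\lim_s g_s$ with $g_s(y) = \bbE_{0,y}\big(W_s^- ;\ \cQ_\infty^-(0,s)\big)$, and the event $\cQ_\infty^-(0,s)$ in~\eqref{e:692} involves only $\gamma_{\infty,-\cdot}$ and $Y_\infty$, so $g$ manifestly depends only on right-boundary data and the claim follows by inspection. Your time-reversal derivation reaches the same conclusion but needs the uniqueness of the factorization; that is fine, just heavier.

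The splitting step, however, does not go through as written. First, you apply Proposition~\ref{p:A3} to the middle factor $\bbP_{s,W_s}^{t-s,W_{t-s}}\big(A^{(r)}_{[s,t-s]}\big)$, but the random endpoints $W_s, W_{t-s}$ need not lie in the range $R_\epsilon$ where the asymptotic of Proposition~\ref{p:A3} is uniform, so the asymptotic cannot simply be substituted inside the expectation. Second, and more seriously, you assert that after $t\to\infty$ the middle factor's functions $\tilde f^{(r)}_s, \tilde g^{(r)}_s$ ``depend only on $(Y^{(r)}_u)_{u\le s}$ and $(\gamma^{(r)}_{\infty,u})_{u\le s}$''; this is false. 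The middle interval is $[s,t-s]$, so after re-centering, $\tilde f^{(r)}_s$ encodes the curve and decoration data at times $\ge s$, i.e.\ the tail data, not the bounded-time data. The step that was supposed to localize the $r$-dependence to a bounded window therefore collapses. Third, applying Proposition~\ref{p:A3} recursively to the middle factor is circular: that proposition is what produces the $f^{(r)}, g^{(r)}$ whose convergence you are trying to prove, and no new control on the $r$-dependence of $\tilde f^{(r)}_s$ is gained.

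The paper avoids all of this by not touching Proposition~\ref{p:A3} for the middle factor. Instead it uses the entropic repulsion bound (Lemma~\ref{l:10.7}) to replace the middle event by the bare Brownian event $\{\max_{[s,t-s]} W_u \le 0\}$, whose probability is given exactly by the reflection principle and contributes the universal factor $2 z^- w^-/t(1+o(1))$. This reduces the problem, via Lemma~\ref{l:10.9}, to the finite-horizon expectations $f_s, g_s$ in~\eqref{e:795}, which genuinely depend only on data on $[0,s]$ -- precisely the localization you wanted. The uniformity needed for the limit interchange is then not a hoped-for consequence of Proposition~\ref{p:A2}; it comes from the observation that the rate of convergence in $s$ in~\eqref{e:196} is controlled by Lemma~\ref{l:10.7}, whose constants depend on $Y, \gamma$ only through~\eqref{e:A1} and~\eqref{e:A3} with the common $\delta$. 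With that uniformity in hand, dominated convergence gives $f^{(r)}_s(x)\to f_s(x)$ as $r\to\infty$ at fixed $s$, and the exchange of the $r\to\infty$ and $s\to\infty$ limits is legitimate. That entropic-repulsion reduction is the missing idea in your argument.
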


\medskip
Finally, we show that conditioned to stay below the curve, the walk $W-Y$ gets repelled from it. This sharp entropic-repulsion-type result, which is not needed in~\cite{cortines2017structure}, will be employed in the sequel paper~\cite{cortines2017structure2} and hence we might as well include it here. The sharpness of the statement necessitates some additional regularity conditions for $\gamma$. For our purposes it will suffice to assume that (see Figure~\ref{f:Curve}):
\begin{equation}
\begin{split}
\label{e:A3s}
\gamma_{t,u} - \frac{u}{r} \gamma_{t,r} & \leq \delta^{-1} \big(1+(\wedge^r(u))^{1/2-\delta}\big)
\\
\gamma_{t,u'} - \frac{t-u'}{t-r} \gamma_{t,r} & \leq \delta^{-1} \big(1+(\wedge^{t-r}(u'-r))^{1/2-\delta}\big)\,,
\end{split}
\end{equation}
for all $0 < u < r < u' < t$. As usual, we also assume~\eqref{e:A1},~\eqref{e:A2} and~\eqref{e:A3}. Since we are not after asymptotics, we need not assume anything else. Under these assumptions we then have,
\begin{prop}
\label{p:A5}
Suppose that $W, Y, \cN$ and $\gamma$ are defined as above with respect to some $\delta \in (0,1/2)$ and $\lambda>0$. Then for all $M \geq 1$ and $x,y \in \bbR$, 
\begin{equation}
\nonumber
\varlimsup_{t \to \infty} \sqrt{s} \, \bbP_{0,x}^{t,y} \Big( \max_{u \in [s, t-s]} W_u - \gamma_{t,u} \geq -M \Big|
\max_{\sigma_k \in [0,t]}\big(
W_{\sigma_k} - \gamma_{t,\sigma_k} - Y_{\sigma_k}\big) \leq 0 \Big) 
\end{equation}
is bounded above uniformly in $s \geq 1$.
\end{prop}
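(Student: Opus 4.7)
The approach is to reduce the continuous-time event
$\big\{\max_{u \in [s, t-s]} (W_u - \gamma_{t,u}) \geq -M\big\}$
to a union of ``near-curve'' events at discrete times $r \in \bbZ \cap [s, t-s]$, decompose each via the Brownian-bridge Markov property at $r$, apply Proposition~\ref{p:A2} to the two resulting independent sub-bridges, and divide by the asymptotic denominator $\bbP^{t,y}_{0,x}(\cB) \sim 2 f(x) g(y)/t$ supplied by Proposition~\ref{p:A3}. Throughout, $\cB$ denotes the barrier event $\{W_{\sigma_k} - Y_{\sigma_k} \leq \gamma_{t,\sigma_k} \text{ for all } \sigma_k \in [0,t]\}$.

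\textbf{Decomposition.} After absorbing unit-interval oscillations of $W$ and $\gamma$ into an inflated threshold $M' = M + O(1)$ via a fluctuation estimate, it suffices to bound
$\sum_{r \in \bbZ \cap [s, t-s]} \bbP^{t,y}_{0,x}(W_r \geq \gamma_{t,r} - M', \cB)$.
For each $r$, conditioning on $W_r = z$ and using the bridge Markov property together with the independence of $(\cN, Y)$ from $W$, the event $\cB$ splits into its restrictions $\cB_1, \cB_2$ to Poisson points in $[0, r]$ and $[r, t]$, and
\[
\bbP^{t,y}_{0,x}(W_r \geq \gamma_{t,r} - M', \cB) = \int_{\gamma_{t,r} - M'}^{\infty} p(r, z) \, \bbP^{r,z}_{0,x}(\cB_1) \, \bbP^{t,y}_{r,z}(\cB_2) \, \rmd z,
\]
where $p(r, z)$ is the Brownian-bridge transition density. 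Subtracting from $\gamma_{t, \cdot}$ its linear interpolation through $(r, \gamma_{t,r})$ on each half transforms the residual curve into one admissible for Proposition~\ref{p:A2}: this is precisely what hypothesis~\eqref{e:A3s} guarantees.

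\textbf{Bounds and summation.} In the main range $z \in [\gamma_{t,r} - M', \gamma_{t,r}]$, the first bound of Proposition~\ref{p:A2} yields $\bbP^{r,z}_{0,x}(\cB_1) \leq C(x^-+1)((\gamma_{t,r}-z)+1)/r$ and $\bbP^{t,y}_{r,z}(\cB_2) \leq C(y^-+1)((\gamma_{t,r}-z)+1)/(t-r)$, while the Gaussian exponent of $p(r, z)$ at $z \approx \gamma_{t,r}$ is bounded by~\eqref{e:A3}, giving $p(r, z) \leq C/\sqrt{r \wedge (t-r)}$ there. For $z > \gamma_{t,r}$ we invoke the second bound of Proposition~\ref{p:A2} (reducing first to $x, y \leq 0$ via Remark~\ref{r:monotonicity}), whose exponential decay in $z - \gamma_{t,r}$ combines with the Gaussian tail of $p(r, z)$ to render this tail of the same order or smaller. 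Integrating $z$ over the main range yields per-$r$ contribution $\leq C(x^-+1)(y^-+1)(M'+1)^3/(r(t-r)\sqrt{r \wedge (t-r)})$, and
\[
\sum_{r = s}^{t-s} \frac{1}{r(t-r)\sqrt{r\wedge(t-r)}} \leq \frac{C}{t \sqrt{s}}
\]
by elementary estimates. Dividing by the asymptotic denominator from Proposition~\ref{p:A3} gives a conditional probability $\leq C_{x,y,M}/\sqrt{s}$, whence the $\sqrt{s}$-scaled quantity is uniformly bounded in $s \geq 1$.

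\textbf{Main obstacle.} The principal difficulty is to keep the threshold inflation $M' - M$ truly $O(1)$ rather than $O(\sqrt{\log t})$: a crude Gaussian estimate on unit-interval oscillations of $W$ produces the latter, which, combined with the polynomial dependence on $M'$ above, introduces a spurious $(\log t)^{O(1)}$ overhead incompatible with a uniform-in-$t$ limsup. The remedy is to estimate $\max_{u \in [r, r+1]} (W_u - \gamma_{t,u})$ conditionally on $W_r = z$ via the reflection principle, producing a Gaussian factor $\exp(-(\gamma_{t,r} - M - z)^2/2)$ that integrates directly against the Proposition~\ref{p:A2} bounds and is absorbed into the main-range estimate.
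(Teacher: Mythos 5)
Your proposal is correct and follows essentially the same route as the paper: the paper isolates your numerator estimate as a separate Lemma~\ref{l:10.7.0}, proved by exactly the decomposition-at-integer-times argument you describe---Markov property at $r$, linear recentering via~\eqref{e:A3s} to make Proposition~\ref{p:A2} applicable to each sub-bridge, a bound on the bridge marginal density, a two-point $(W_r,W_{r+1})$ conditioning combined with the reflection principle to control the oscillation on $[r,r+1]$ without incurring a $\log t$ threshold inflation, and summation over $r$---and then obtains Proposition~\ref{p:A5} by dividing through by the asymptotic denominator from Proposition~\ref{p:A3}, using positivity of $f$ and $g$.
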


%\bigskip
\begin{figure}[ht!]
\centering
    \includegraphics[width=0.95\textwidth]{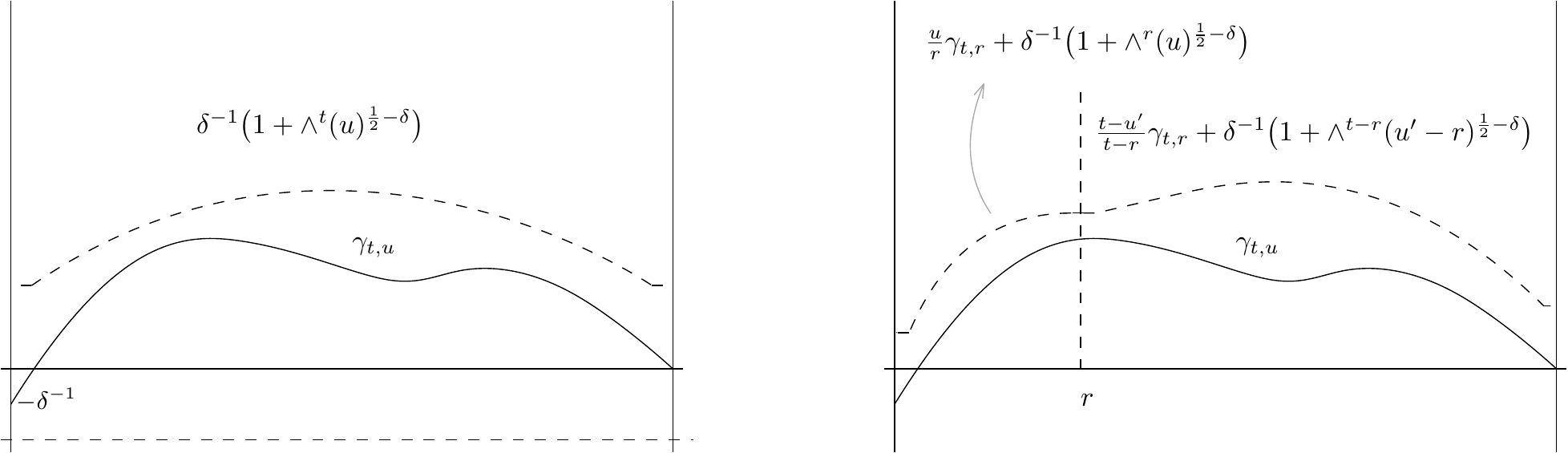} 
    \caption{Illustrations of boundedness condition~\eqref{e:A3} (left) and regularity condition~\eqref{e:A3s} (right), imposed on $u \mapsto \gamma_{t,u}$ as assumptions in Proposition~\ref{p:A5}. Only~\eqref{e:A3} is assumed in Proposition~\ref{p:A4}.
} \label{f:Curve}
\end{figure}

\section{Proofs}
\subsection{Brownian motion below a curve}
\label{ss:1}
The starting point for all proofs is the following upper and lower bounds for the probability that a standard Brownian motion stays below a barrier curve, whose absolute value grows polynomially with exponent smaller than $1/2$. Since we are only after upper and lower bounds, without loss of generality, we can use the largest, resp. smallest possible curves in our statements. We treat both Brownian bridge and Brownian motion and naturally recover the same bounds, up to constants, as in the  case of the barrier curve being the zero function.

Our motion will be assumed to start and terminate strictly below the curve with the constants in the bounds depending on how close the motion and the curve are allowed to be at the boundary. We thus only care about staying below the curve ``away from the boundary'' (staying below the curve near the boundary, which is not needed for our purposes, requires an additional analysis).

 Although the estimates we present here are rather folklore by now (c.f.~\cite{Bramson1978maximal} or, in more generality,~\cite{biskup2015full}), we could not find a reference which covers the full range of exponents we want and hence proofs are included as well. We follow Bramson's arguments in~\cite{B_C} which rely on the Cameron-Martin transformation. 
This makes all arguments rather simple. Lastly, to avoid working with negative values, we will actually consider the symmetric case of a Brownian motion staying above a curve.

\begin{prop}
\label{p:3.6}
For any $\delta \in (0,1/2)$, there exists $C, C' > 0$ such that for all $x,y > \delta$ and $t > 0$,
\begin{equation}
\label{e:3.18}
\bbP_{0,x}^{t,y} \Big(\min_{u \in [0,t]} \big(W_u + \delta^{-1} (\wedge^t(u))^{1/2-\delta}\big)  \geq 0 \Big)
\leq C \frac{xy}{t} \,,
\end{equation}
and for all $x,y > \delta$ and $t > 0$ such that $xy < \delta^{-1} t$,
\begin{equation}
\label{e:3.19}
\bbP_{0,x}^{t,y} \Big(\min_{u \in [0,t]} \big(W_u - \delta^{-1} (\wedge^t(u))^{1/2-\delta} \big) \geq 0 \Big)
\geq C' \frac{xy}{t} \,.
\end{equation}
\end{prop}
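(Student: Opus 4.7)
The plan is to follow the Cameron--Martin/Girsanov strategy attributed to Bramson in \cite{B_C}. The baseline is the exact identity for a Brownian bridge from $x$ to $y$ on $[0,t]$ with $x, y > 0$ staying above zero, namely $\bbP_{0,x}^{t,y}(\min_{u \in [0,t]} W_u > 0) = 1 - \rme^{-2xy/t}$, which is $\asymp xy/t$ whenever $xy = O(t)$. Both bounds in Proposition~\ref{p:3.6} should reduce to this, up to a multiplicative constant depending on $\delta$, by absorbing the barrier $\pm h(u) := \pm\delta^{-1}(\wedge^t(u))^{1/2-\delta}$ via a deterministic shift of the bridge. The principal obstruction is that $h$ does not lie in the Cameron--Martin space of the bridge on $[0,t]$: its derivative behaves like $u^{-1/2-\delta}$ near zero and so fails to be in $L^2$. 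I would therefore split the interval into two constant-width boundary strips and a bulk, and apply Cameron--Martin only on the bulk $[1, t-1]$, where $h$ is smooth and $\|h'\|_{L^2([1, t-1])}^2$ is bounded uniformly in $t$.

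For the upper bound \eqref{e:3.18}, I would start from the trivial inclusion $\{W \geq -h \text{ on } [0,t]\} \subseteq \{W \geq -h \text{ on } [1, t-1]\}$, condition on the pair $(W_1, W_{t-1})$, and on the bulk apply a Cameron--Martin shift by a smooth $\bar h \geq h$ chosen so that $\bar h$ minus the linear interpolation of its endpoint values is a genuine drift (vanishing at $1$ and $t-1$) with $L^2$-norm of derivative $\leq C(\delta)$ uniformly in $t$. The shift places the event inside the classical positivity event for a standard bridge between $W_1 + O(1)$ and $W_{t-1} + O(1)$, whose probability is bounded by $C(W_1^+ + 1)(W_{t-1}^+ + 1)/(t-2)$. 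To avoid losing a $\sqrt{\cdot}$ from Cauchy--Schwarz applied to the Girsanov density, I would further restrict to the ``typical'' event $\{\max_{u \in [1, t-1]} W_u \leq K(W_1 + W_{t-1} + \sqrt{t})\}$, on which an integration by parts (using the concavity $\bar h'' < 0$) yields a $t$-uniform bound on the Girsanov density; the super-typical complementary event is negligible by standard Gaussian tail estimates on the bridge supremum. Integrating over the joint law of $(W_1, W_{t-1})$ under $\bbP_{0,x}^{t,y}$ then produces the advertised $C xy/t$.

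For the lower bound \eqref{e:3.19}, I would restrict to an explicit good sub-event on which every factor is quantitatively controlled: for suitably chosen $a \geq \max(x, \delta^{-1})$ and $b \geq \max(y, \delta^{-1})$, require $W_1 \in [a, a+1]$, $W_{t-1} \in [b, b+1]$, $W$ stays above $h$ on $[0, 1] \cup [t-1, t]$, and $W$ stays above $h$ on $[1, t-1]$. The boundary strips contribute a uniformly positive probability by direct Brownian estimates (using $x, y > \delta$); the marginal density of $(W_1, W_{t-1})$ under the unconstrained bridge contributes the factor $\asymp xy/t$ (the hypothesis $xy < \delta^{-1} t$ keeps this in the linear regime of the Gaussian density); and on the bulk a Cameron--Martin shift by $-\bar h$ reduces the event to a standard positivity event for a bridge between $a - O(1)$ and $b - O(1)$, whose probability is bounded below by a positive $\delta$-dependent constant, while the Girsanov density itself is bounded below on this good event.

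The delicate step in both directions is the same: no function in the Cameron--Martin space can dominate (respectively, be dominated by, with matching endpoint values) the curve $h$ near the boundary, forcing the shift to be confined to the bulk and combined with elementary estimates on the boundary strips. Tracking the sharp $xy/t$ dependence on the endpoints through this decomposition, uniformly in $t$ and uniformly in how close $x, y$ are to the boundary value $\delta$, is where the most care is required.
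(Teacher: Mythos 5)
Your high-level plan -- Cameron--Martin with a truncated drift, plus integration by parts -- is the paper's strategy, and your diagnosis that the barrier $h(u)=\delta^{-1}(\wedge^t(u))^{1/2-\delta}$ fails to lie in the Cameron--Martin space near the boundary is correct. But the step where you claim a $t$-uniform bound on the Girsanov density is broken as written. After integrating by parts, the exponent in the density is $-\int W_u\,\rmd\bar h'(u)=\int W_u\,|\rmd\bar h'(u)|$ (since $\bar h''<0$), and $\int_1^{t/2}|\rmd\bar h'|$ is $O(1)$ with its mass concentrated near $u=1$; so this exponent is controlled by how large $W_u$ can be against that finite measure. Under your restriction $\{\max_{[1,t-1]}W_u\le K(W_1+W_{t-1}+\sqrt{t})\}$ the envelope is essentially flat of order $\sqrt t$, and the exponent comes out as $O(\sqrt t)$ rather than $O(1)$: you have simply relocated the half-power loss from Cauchy--Schwarz into the exponential. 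Concavity only tells you the exponent is monotone in the envelope on $W$; it does not shrink a flat $\sqrt t$ envelope. The same defect hits your lower bound even harder: the bulk event $\{W\ge h\}$ is a one-sided constraint from below, so it provides no cap on $W$ at all, and the Girsanov density in the direction you need can then be arbitrarily small.

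What the paper actually restricts to is $\cA^t_M=\{w_u\le M(\wedge^t(u))^{\eta'}\text{ on the bulk}\}$ with $\eta'>1/2$ chosen so that $\eta+\eta'<1$ (where $\eta=1/2-\delta$). On this event $\int W_u\,|\rmd\bar h'(u)|\le CM\int u^{\eta'}u^{\eta-2}\,\rmd u<\infty$ uniformly in $t$, because $\eta+\eta'-2<-1$; this arithmetic is the whole point of choosing a $(\wedge^t(u))^{\eta'}$-shaped envelope rather than a constant one. Equally important is the mechanism by which this restriction is inserted at $O(1)$ cost: the strong FKG property of the bridge conditioned above a line (Lemma 2.6 of \cite{B_C}), combined with stochastic monotonicity, shows that $\bbP(\cA^t_M\mid\cB)$ is bounded away from $0$ uniformly in $t,x,y$; this is used both to pass from $\bbP(\cB)$ to $\bbP(\cB\cap\cA^t_M)$ in the upper bound and back again in the lower bound. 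This FKG step is absent from your sketch and is not optional. A smaller remark: rather than conditioning on $(W_1,W_{t-1})$ and treating boundary strips separately, the paper just replaces the drift by a version $\alpha_t$ whose derivative vanishes on $[0,\delta^{3/\eta}]\cup(t-\delta^{3/\eta},t]$; the error $0\le\delta^{-1}(\wedge^t(s))^\eta-\alpha_{t,s}\le\delta/2$ is absorbed into shifting the endpoints by $\delta/2$, which avoids the two-dimensional integration entirely and lands directly on the Ballot theorem.
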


A one sided version of the above proposition is
\begin{prop}
\label{p:3.7}
For any $\delta \in (0,1/2)$, there exists $C, C' > 0$ such that for all $x > \delta$ and $t > 0$,
\begin{equation}
\label{e:3.18a}
\bbP_{0,x} \Big(\min_{u \in [0,t]} \big(W_u + \delta^{-1} u^{1/2-\delta}\big)  \geq 0 \Big)
\leq C \frac{x}{\sqrt{t}}  \,.
\end{equation}
and for all $x > \delta$ and $t > 0$ such that $x < \delta^{-1} \sqrt{t}$,
\begin{equation}
\label{e:3.19a}
\bbP_{0,x} \Big(\min_{u \in [0,t]} \big(W_u - \delta^{-1} u^{1/2-\delta} \big) \geq 0 \Big)
\geq C' \frac{x}{\sqrt{t}} \,.
\end{equation}
\end{prop}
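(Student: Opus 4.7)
Both bounds follow from Proposition~\ref{p:3.6} via a doubling argument exploiting the identity $\wedge^{2t}(u) = u$ for $u \in [0,t]$: the one-sided curve $\delta^{-1}u^{1/2-\delta}$ on $[0,t]$ agrees on this interval with the symmetric curve $\delta^{-1}(\wedge^{2t}(u))^{1/2-\delta}$ to which Proposition~\ref{p:3.6} applies on $[0,2t]$. The plan is therefore to extend the Brownian motion to time $2t$, condition on $W_{2t} = y$, and apply Proposition~\ref{p:3.6} to the resulting Brownian bridge on $[0,2t]$.

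For the lower bound~\eqref{e:3.19a}, denote by $E_t$ the event appearing in~\eqref{e:3.19a} and by $E^{\rm br}_{2t}$ the event appearing in~\eqref{e:3.19} with $t$ replaced by $2t$. Since $E^{\rm br}_{2t} \subseteq E_t$ (the former adds a constraint on $[t,2t]$),
\begin{equation*}
\bbP_{0,x}(E_t) \;\geq\; \bbP_{0,x}(E^{\rm br}_{2t}) \;=\; \int \bbP^{2t,y}_{0,x}(E^{\rm br}_{2t})\, p_{2t}(y\,|\,x)\, dy.
\end{equation*}
Applying the lower bound~\eqref{e:3.19} to the integrand, restricted to $y$ in a window of width $\Theta(\sqrt t)$ centered near $\sqrt t$---where the hypotheses $y > \delta$ and $xy < \delta^{-1}(2t)$ hold thanks to $x < \delta^{-1}\sqrt t$, and the Gaussian density $p_{2t}(y\,|\,x)$ is of order $t^{-1/2}$---yields the desired lower bound of order $x/\sqrt{t}$.

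For the upper bound~\eqref{e:3.18a}, the analogous inclusion $E^{\rm br}_{2t} \subseteq E_t$ goes the wrong way. I additionally need to show
\begin{equation*}
\bbP_{0,x}(E_t) \;\leq\; c^{-1}\, \bbP_{0,x}(E^{\rm br}_{2t}) \qquad (\star)
\end{equation*}
for some $c > 0$; combined with the upper bound~\eqref{e:3.18} integrated over $y$ (which bounds the right-hand side by $O(x/\sqrt t)$), this produces the desired estimate. By the Markov property at time $t$, $(\star)$ is equivalent to $\bbE_{0,x}[\Phi(W_t) \mid E_t] \geq c$, where, after the substitution $v = u - t$,
\begin{equation*}
\Phi(z) := \bbP_{0,z}\bigl(W_v + \delta^{-1}(t-v)^{1/2-\delta} \geq 0,\ v \in [0,t]\bigr).
\end{equation*}

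The main obstacle lies in establishing this conditional lower bound. I would verify that $\Phi(z) \geq c_0 > 0$ for $z \geq \sqrt t$---by comparison with the classical ballot bound for a Brownian motion from $z$ staying above zero on $[0, t]$---and then show that under $\bbP_{0,x}(\cdot \mid E_t)$ the endpoint $W_t$ sits at height at least $\sqrt t$ with probability bounded below. This last statement is a quantitative entropic-repulsion property; it follows from the inequality $\bbP_{0,x}(E_t, W_t \in [\sqrt t, 2\sqrt t]) \gtrsim x/\sqrt t$ (proved exactly as in the lower-bound argument above, by restricting the doubling integral further to bridges with endpoint in that window) combined with an a-priori upper bound of order $x/\sqrt t$ on $\bbP_{0,x}(E_t)$ itself, closed via a short bootstrap against the target rate.
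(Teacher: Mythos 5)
Your proposal takes a genuinely different route from the paper: the paper treats the one-sided case by redoing the Cameron--Martin/Girsanov argument of Proposition~\ref{p:3.6} with a one-sided drift $\wt\beta$ and one-sided versions of the auxiliary events, whereas you reduce the one-sided problem to the bridge problem by doubling the time interval and exploiting $\wedge^{2t}(u)=u$ on $[0,t]$. For the lower bound~\eqref{e:3.19a} this works cleanly: $E^{\rm br}_{2t}\subseteq E_t$, and integrating the bridge lower bound~\eqref{e:3.19} over endpoints $y$ in a window of width $\Theta(\sqrt t)$ gives $\gtrsim x/\sqrt t$. That is a correct and somewhat slicker derivation of the lower bound than the paper's.

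The upper bound, however, is where your argument has a genuine gap. You correctly reduce~\eqref{e:3.18a} to the inequality $\bbE_{0,x}\big[\Phi(W_t)\mid E_t\big]\geq c$, but your plan for proving this — establish $\bbP_{0,x}\big(W_t\geq\sqrt t\mid E_t\big)\geq c$ by comparing a lower bound $\bbP_{0,x}(E_t,\,W_t\in[\sqrt t,2\sqrt t])\gtrsim x/\sqrt t$ against ``an a-priori upper bound of order $x/\sqrt t$ on $\bbP_{0,x}(E_t)$ itself, closed via a short bootstrap'' — is circular as stated: the a-priori upper bound you invoke is precisely the statement~\eqref{e:3.18a} you are trying to prove, and you give no indication of how the bootstrap inequality is to be set up or closed (the natural Markov decomposition of $\bbP_{0,x}(E_t,\,W_t<\sqrt t)$ over $W_{t/2}$ does not obviously yield a contraction). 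There is, however, a clean way to close this gap without any bootstrap: both $\mathbf 1_{E_t}$ and $\Phi(W_t)$ are non-decreasing functionals of the path $(W_u)_{u\leq t}$, so Gaussian positive association (Pitt's theorem, since all covariances $s\wedge u$ are non-negative) gives directly
\begin{equation*}
\bbP_{0,x}\big(E^{\rm br}_{2t}\big)=\bbE_{0,x}\big[\mathbf 1_{E_t}\Phi(W_t)\big]\geq\bbP_{0,x}(E_t)\,\bbE_{0,x}\big[\Phi(W_t)\big],
\end{equation*}
and the \emph{unconditional} quantity $\bbE_{0,x}[\Phi(W_t)]$ is bounded below by an absolute constant for $x>0$, since $\Phi(z)\geq\bbP(|W_1|\leq z^+/\sqrt t)$ and $W_t\sim N(x,t)$. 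You should also note, when integrating~\eqref{e:3.18} to bound $\bbP_{0,x}(E^{\rm br}_{2t})$, that the resulting bound is $Cx(x+\sqrt t)/t$; this is $O(x/\sqrt t)$ only after restricting without loss of generality to $x\leq\sqrt t$ (for $x>\sqrt t$ the claimed bound exceeds $1$ and is vacuous). With these repairs the doubling proof goes through and is a legitimate alternative to the paper's Cameron--Martin argument.
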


\begin{proof}[Proof of Proposition~\ref{p:3.6}]
First observe that we can always assume that $t > \delta^3$. In the lower bound case, this follows from the restriction on $x$,$y$ and $t$. For the upper bound, we can always adjust $C$ so that if $t \leq \delta^3$ the right hand side of~\eqref{e:3.18} is larger than $1$. Now, denote by $\cC_{0,0}^{t,0}$ the set of all continuous functions from $[0,t]$ to $\bbR$ vanishing at $0$ and $t$. Set $\eta := 1/2-\delta$ and choose $\eta' > 1/2$ such that $\eta + \eta' < 1$. Define for all $x,y \geq 0$, $t > \delta^3$ and $M > 0$ the subsets:
\begin{equation}
\begin{split}
\cB_{0,x}^{t,y}  & := \Big\{ w \in \cC_{0,0}^{t,0} :\: \min_{u \in [0,t]} \big(w_u + (1-u/t)x + (u/t) y\big) \geq 0 \Big\} \,, \\
\cA^t_M & := \Big\{ w \in \cC_{0,0}^{t,0} :\: \max_{u \in [\delta^{3/\eta},t-\delta^{3/\eta}]} \big(w_u - M (\wedge^t(u))^{\eta'} \big) \leq 0 \Big\} \,.
\end{split}
\end{equation}

For the upper bound, we need to consider $\bbP_{0,0}^{t,0} \big( (W + \delta^{-1} (\wedge^t)^\eta) \in \cB_{0,x}^{t,y} \big)$. As a first step, we claim that if $M$ is chosen large enough, we can bound from above instead the probability
\begin{equation}
\label{e:3.21}
\bbP_{0,0}^{t,0} \Big( W + \delta^{-1} (\wedge^t)^\eta \in \cB_{0,x}^{t,y} \cap \cA^t_M \Big) \,.
\end{equation}
Indeed, by the strong FKG property of $W$ (namely, monotonicity of the conditional law of $W$ with respect to the curve above which it is conditioned to stay; see Lemma 2.6 in~\cite{B_C}), we have
\begin{equation}
\begin{split}
\label{e:2.7}
\bbP_{0,0}^{t,0}  \Big( W + \delta^{-1} (\wedge^t)^\eta \in \cA^t_M \,\Big|& \, 
 W + \delta^{-1}  (\wedge^t)^\eta \in \cB_{0,x}^{t,y} \Big) \\
&\geq \bbP_{0,0}^{t,0} \big( W + \delta^{-1} (\wedge^t)^\eta \in \cA^t_M \,\Big|\, 
 W \in \cB_{0,\delta}^{t,\delta} \Big) \\
&\geq \bbP_{0,0}^{t,0} \Big( W \in \cA^t_{M-\delta^{-1}} \,\Big|\, 
 W \in \cB_{0,\delta}^{t,\delta} \Big) \,.
\end{split}
\end{equation}
Since $\eta' > 1/2$, the last probability is bounded away from $0$ uniformly in $t$, if $M$ is chosen sufficiently large. This follows, e.g. from the proof of Lemma 2.7 in~\cite{B_C} (just sum the probabilities in the display above (2.24) and take $C \to \infty$).

Next, for $0 \leq u \leq t$ and $t > \delta^3$, let $\beta_{t,u}$ be defined via
\begin{equation}
\beta_{t,u} := 
\begin{cases}
	0 & u \in \big[0 ,\, \delta^{3/\eta}\big] \,,\\
	\delta^{-1} \eta u^{\eta - 1}		& u \in \big(\delta^{3/\eta}, \,t/2\big] \,,\\
	-\delta^{-1} \eta(t-u)^{\eta - 1} 	& u \in \big(t/2,\, t-\delta^{3/\eta}\big] \,, \\
	0 						& u \in \big(t-\delta^{3/\eta},\, t\big] \,,
\end{cases}
\end{equation}
and set $\alpha_{t,s} := \int_0^s \beta_{t,u} \rmd u$. Observing that $\delta^{-1} (\wedge^t(s))^\eta - \alpha_{t,s} \in [0,\delta/2]$ for all $0 \leq s \leq t$ and denoting by $\alpha_t$ the function $s \mapsto \alpha_{t,s}$, we can upper bound~\eqref{e:3.21} by $\bbP_{0,0}^{t,0} \Big( W + \alpha_t \in \cB_{0,x_+}^{t,y_+} \cap \cA^t_M \Big)$, where $x_+ := x+\delta/2$ and $y_+ := y+\delta/2$.

Now using the Cameron-Martin formula (see, e.g. Section 1.4 in~\cite{morters2010brownian}) for the Radon-Nykodim derivative $\rmd \bbP_{0,0}^{t,0} (W + \alpha_t \in \cdot) / \rmd \bbP_{0,0}^{t,0} (W \in \cdot)$, the last probability is equal to 
\begin{equation}
\exp \Big[-\frac12 \int_0^t \beta^2_{t,u} \rmd u + \frac{1}{2t} \Big(\int_0^t \beta_{t,u} \rmd u\Big)^2\Big] \bbE_{0,0}^{t,0} \Big[\rme^{\int_0^t \beta_{t,u} \rmd W_u} ;\; 
W \in \cB_{0,x_+}^{t,y_+} \cap \cA^t_M \Big] \,.
\end{equation}
The exponent outside the expectation is bounded by a constant uniformly in $t$. Since $\beta_{t,u}$ has bounded variations in $u$, we can use the integration by parts formula to write the last mean as 
\begin{equation}
\bbE_{0,0}^{t,0} \Big[ \exp\Big(-\int_0^t W_u \rmd \beta_{t,u} \Big) ;\; W \in \cB_{0,x_+}^{t,y_+} \cap  \cA^t_M \Big] \,.
\end{equation}
The relation $\eta+\eta' < 1$ guarantees that for any $M$, whenever $W \in \cA^t_M$ the integral is bounded by a uniform constant, regardless of $t$. This implies that the last expectation is at most $C \bbP_{0,x_+}^{t,y_+} \big(\min_{u \in [0,t]} W_u \geq 0\big)$, which is bounded above by $C (x_+y_+)/t \leq C'xy/t$ thanks to the Ballot Theorem for Brownian motion. This shows~\eqref{e:3.18}.

The argument for~\eqref{e:3.19} is quite similar. Since we are after a lower bound, we can freely 
require that $W-\delta^{-1} (\wedge^t)^\eta \in \cA^t_M$ for any $M > 0$. Using the definition of $\beta_{t,u}$ and $\alpha_{t,s}$ from before, it suffices to bound from below the probability
\begin{equation}
\bbP_{0,0}^{t,0} \Big( W - \alpha_t \in \cB_{0,x_-}^{t,y_-} \cap \cA^t_M \Big) \,,
\end{equation}
where $x_- := x-\delta/2$ and $y_- := y-\delta/2$. Using Cameron-Martin again, this time for the law of $W + \alpha_t$ and hence with $-\beta_{t,u}$ in place of $\beta_{t,u}$, followed by integration by parts as before, the latter is at least a constant times
\begin{equation}
\nonumber
\bbE_{0,0}^{t,0} \Big[ \exp\Big(\int_0^t W_u \rmd \beta_{t,u} \Big) ;\; W \in \cB_{0,x_-}^{t,y_-} \cap  \cA^t_M \Big] 
\geq C \bbP_{0,0}^{t,0} \big(W \in \cB_{0,x_-}^{t,y_-} \cap  \cA^t_M \big) \,,
\end{equation}
where $C$ depends on $M$.

Lastly, we use the strong FKG property and argue as in~\eqref{e:2.7} to claim,
\begin{equation}
\bbP_{0,0}^{t,0} \big(W \in \cA^t_M \,\big|\, W \in \cB_{0,x_-}^{t,y_-} \big)
\geq \bbP_{0,0}^{t,0} \big(W \in \cA^t_M \,\big|\, W \in \cB_{0,\delta/2}^{t,\delta/2} \big) \geq C \,,
\end{equation}
for some $C > 0$ which does not depend on $t$, $x$ or $y$, once $M$ is chosen large enough. Combining the last two displays, it remains to bound from below $\bbP_{0,x_-}^{t,y_-}(\min_{u \in [0,t]} W_u \geq 0)$. Thanks to again to the Ballot Theorem, this is at least $C (x_- y_-) / t \geq C'xy/(4t)$ whenever $xy/t < \delta^{-1}$ for some $C' > 0$ which depends on $\delta$.
\end{proof}

\begin{proof}[Proof of Proposition~\ref{p:3.7}]
The proof is similar to the proof of Proposition~\ref{p:3.6} and hence we only highlight the differences. Again we can assume without loss that $t > \delta^4$ and set $\eta := 1/2-\delta$. Instead of event $\cB_{0,x}^{t,y}$ and $\cA^t_M$ we now use  
\begin{equation}
\begin{split}
\wt{\cB}_{0,x}  & := \Big\{ w \in \cC_{0,0}^{t,0} :\: \min_{u \in [0,t]} \big(w_u + x\big) \geq 0 \Big\} \,, \\
\wt{\cA}^t_M & := \Big\{ w \in \cC_{0,0}^{t,0} :\: \max_{u \in [\delta^{4/\eta},t]} \big(w_u - M u^{\eta'} \big) \leq 0 \Big\} \,,
\end{split}
\end{equation}
for $t > \delta^4$, $M > 0$ and $\eta'$ chosen as before so that $\eta + \eta' < 1$. The function $(t,u) \mapsto \beta_{t,u}$ is now replaced by
\begin{equation}
\wt{\beta}_{t,u} := 
\begin{cases}
	0 & u \in [0 ,\, \delta^{4/\eta}\big] \,,\\
	\delta^{-1} \eta u^{\eta - 1}		& u \in \big(\delta^{4/\eta}, \,t\big] \,,
\end{cases}
\end{equation}
for $0 \leq u \leq t$ and $t > \delta^4$.

Proceeding as before, we replace the barrier curve by the indefinite integral of $u \mapsto \wt{\beta}_{t,u}$ and freely add the restriction that paths lie in $\wt{\cA}^t_M$ (this requires one sided analogs of Lemmas 2.6 and 2.7 from~\cite{B_C}, which can be proved using similar arguments). Using the Cameron-Martin formula for the unconditional Brownian motion, we are then led to bounding
\begin{equation}
\label{e:3.24}
\exp \Big[-\frac12 \int_0^t \wt{\beta}^2_{t,u} \rmd u \Big] \bbE_{0,0} \Big[\rme^{\pm \int_0^t \wt{\beta}_{t,u} \rmd W_u} ;\; 
W \in \wt{\cB}_{0,x_\pm} \cap \wt{\cA}^t_M \Big] \,,
\end{equation}
with $x_\pm$ defined as before. The integral in the first exponent is bounded uniformly in $t$, exactly as before. For the stochastic integral, we use integration by parts and thus equate it to
\begin{equation}
\nonumber
\pm \wt{\beta}_{t,t} W_t \mp \int_0^t W_u \rmd \wt{\beta}_{t,u}  \,,
\end{equation}
which is bounded uniformly in $t$ on $\wt{\cA}^t_M$, for any choice of $M$ since $\eta + \eta' < 1$.

Thus as before, the probabilities in~\eqref{e:3.18a} and~\eqref{e:3.19a} are bounded above and below respectively by a constant times $\bbP_{0,x_\pm}(\min_{u \in [0,t]} W_u \geq 0)$. It remains to use the Reflection Principle for Brownian motion, to assert that the latter probability is bounded above and below by $Cx/\sqrt{t}$ under the restrictions on $x$. 
\end{proof}

\subsection{The Upper Bound (Proof of Proposition~\ref{p:A2})}
The proof follows from the sequence of lemmas given below. For the first of which we let $t, \delta > 0$, $M > 1$ and suppose that $\ul{s} = (s_k)_{k=0}^n$ for some $n \geq 1$ is a sequence of real numbers, such that with $\Delta_k : = s_k - s_{k-1}$ it satisfies:
\begin{description}
\item[(A1)] $0 = s_0 < s_1 < \dots < s_n = t$,
\item[(A2)] $\Delta_k \leq \delta^{-1} \log^+ k + M$, for all $k=1, \dots, n$.
\item[(A3)] $\delta k - M \leq s_k \leq \delta^{-1} k + M$, for all $k=0, \dots, n$.
\end{description}
We denote by $\cS(\delta, t, M)$ the collection of all such sequences with arbitrary length $n$. 
Given such sequence $\ul{s}=(s_k)_{k=0}^n$, we define $\wt{\ul{s}} = (\wt{s}_k)^n$, by setting $\wt{s}_k := t - s_k$ and also $\wt{\Delta}_k := \wt{s}_k - \wt{s}_{k-1}$. Finally, we let $\wt{\cS}(\delta, t, M)$ be the collection of all sequence $\ul{s}$ such that both $\ul{s}$ and $\wt{\ul{s}}$ are in $\cS(\delta,t, M)$.
\begin{lem}
Let $\delta \in (0,1/2), t > 0$, $M > 1$ and suppose that $\ul{s} = (s_k)_{k=0}^n$ is a sequence in $\wt{\cS}(\delta, t, M)$. Then there exists $C = C(\delta)$ such that for all $x,y \leq 0$
\begin{equation}
\label{e:A5}
\bbP_{0,x}^{t,y} \Big( \max_{k \in [1,n-1]} \big(W_{s_k} - \delta^{-1} (\wedge^t(s_k))^{1/2-\delta} \big) \leq 0 \Big) \leq C M^2 \frac{(x^-+1)(y^-+1)}{t} \,.
\end{equation}
\end{lem}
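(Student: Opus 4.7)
The plan is to reduce the discrete-barrier event to a continuous-barrier event for a slightly enlarged curve, and then apply (a version of) Proposition~\ref{p:3.6}. Write $\gamma_u := \delta^{-1}(\wedge^t(u))^{1/2-\delta}$, let $E_{\rm d} := \{\max_{k \in [1,n-1]}(W_{s_k} - \gamma_{s_k}) \leq 0\}$ denote the event of interest, and introduce the enlarged curve $\hat\gamma_u := \gamma_u + K(1 + (\wedge^t(u))^\epsilon)$ for some $\epsilon \in (0, 1/2 - \delta)$ and a constant $K = K(\delta, M)$ to be chosen large. The symmetric hypothesis $\ul{s} \in \wt{\cS}(\delta,t,M)$ ensures that (A1)--(A3) hold from both endpoints, so the first and last intervals can be treated symmetrically.

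The main (and most delicate) step is to establish
$$
\bbP_{0,x}^{t,y}\big(W_u \leq \hat\gamma_u \text{ for all } u \in [0,t] \,\big|\, E_{\rm d}\big) \geq c_0 > 0
$$
for some $c_0 = c_0(\delta, M)$. Conditionally on the values $(W_{s_k})_k$, the process on each sub-interval $[s_{k-1}, s_k]$ is an independent Brownian bridge, whose maximum above its linear interpolation has reflection-principle tail $\rme^{-2h^2/\Delta_k}$. Since $\gamma$ is concave on $[0,t/2]$ and on $[t/2,t]$, the hypothesis $W_{s_k} \leq \gamma_{s_k}$ forces the linear interpolation $\mathrm{lin}_W(u)$ on any sub-interval not straddling $t/2$ to be dominated by $\gamma_u$; hence the required margin $\hat\gamma_u - \mathrm{lin}_W(u)$ is at least $K(1+(\wedge^t(u))^\epsilon)$. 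Combining with $\Delta_k \leq \delta^{-1}\log^+ k + M$ from (A2) and $\wedge^t(s_{k-1}) \gtrsim k$ from (A3), the exceedance probability on the $k$-th interval is at most $\exp(-cK^2 k^{2\epsilon}/\log k)$, which is summable in $k$ once $K$ is sufficiently large. A union bound---with the at most one interval containing $t/2$ and the first/last intervals absorbed into an additive correction in $K$ of order $\sqrt{M}$---then yields the claim.

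Given this comparison, $\bbP_{0,x}^{t,y}(E_{\rm d}) \leq c_0^{-1}\,\bbP_{0,x}^{t,y}(W_u \leq \hat\gamma_u \text{ for all } u \in [0,t])$. To bound the right-hand side, I would repeat the Cameron-Martin argument of Proposition~\ref{p:3.6} directly for $\hat\gamma$: after the sign flip $W \mapsto -W$ (turning the bridge from $x \leq 0$ to $y \leq 0$ into a bridge from $x^- \geq 0$ to $y^- \geq 0$ which must stay above $-\hat\gamma$), the drift $\beta$ is taken to be the derivative of $\hat\gamma$, which is integrable since $\hat\gamma$ has polynomial growth with exponent $1/2-\delta$ and the $u^\epsilon$ term contributes the integrable $\epsilon u^{\epsilon-1}$. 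This reduces the question to a ballot-theorem estimate for a bridge starting at $x^- + K + O(1)$ and ending at $y^- + K + O(1)$, giving a bound of order $C(x^- + K)(y^- + K)/t$. Gathering the $M$-dependences---through $K \sim \sqrt{M}$ and the additive corrections from the two boundary intervals---yields the stated bound $CM^2 (x^-+1)(y^-+1)/t$.

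The main obstacle will be the conditional lower bound in the second paragraph: showing that the Brownian bridge, conditioned only on lying below $\gamma$ at the discrete times $s_k$, stays globally below a polynomially enlarged curve with positive probability. This requires carefully balancing the Gaussian tails of the bridge excursions (of scale $\sqrt{\Delta_k} \lesssim \sqrt{\log k}$) against the margin $K(1+u^\epsilon)$, which must grow fast enough to suppress those tails, yet slow enough that the enlarged curve still fits into the polynomial form accessible to the Proposition~\ref{p:3.6}-style Cameron-Martin argument.
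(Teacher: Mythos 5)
Your high-level strategy matches the paper's: condition on staying below a continuous enlarged curve, show the conditional probability of that event given $E_{\rm d}$ is bounded below via a union bound over the excursions on each $[s_{k-1},s_k]$ (the reflection-principle tail), and then control the enlarged-curve probability by a Cameron--Martin argument culminating in the ballot theorem. This is exactly the paper's argument. However, there is a genuine gap in the specific way you enlarge the curve.

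The paper enlarges $\gamma_u=\delta^{-1}(\wedge^t(u))^{1/2-\delta}$ to $\delta'^{-1}(\wedge^t(u))^{1/2-\delta}+M'$, where $\delta'^{-1}$ is a \emph{fixed} constant depending only on $\delta$ and all the $M$-dependence is pushed into the additive constant $M'=C_0(M+1)$. This is crucial: the additive constant is absorbed into the boundary values (giving the bridge from $x^-+M'$ to $y^-+M'$, hence the $(x^-+M')(y^-+M')/t \lesssim M^2(x^-+1)(y^-+1)/t$ from the ballot estimate), while the Cameron--Martin drift is $M$-independent, so $\int\beta^2$ and $\int W\,\rmd\beta$ are controlled by $\delta$ alone, as in Proposition~\ref{p:3.6}. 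Your enlargement $\hat\gamma_u=\gamma_u+K(1+(\wedge^t(u))^\epsilon)$ puts the $M$-dependent factor $K$ \emph{on the polynomial part}. When you then run the Cameron--Martin argument with drift $\beta\approx\gamma'+K\epsilon u^{\epsilon-1}$, both $\int\beta^2$ and the bound on $\int W\,\rmd\beta$ on $\cA^t_M$ scale like $K^2$; the prefactor is then of the form $\exp(-cK^2+CK^2)$ with constants whose relative size you do not control, and the naive application of Proposition~\ref{p:3.6} with a $K$-dependent coefficient would produce a $K$-dependent (hence $M$-dependent) constant $C$, which is precisely what you cannot afford. (Indeed, taken at face value, your computation would imply the probability of staying below $\hat\gamma$ tends to zero as $K\to\infty$, an obvious impossibility.) You could rescue your version by replacing $K(\wedge^t(u))^\epsilon$ by $K^{\eta/(\eta-\epsilon)}+(\wedge^t(u))^\eta$ via Young's inequality before applying Cameron--Martin, but that step is missing and changes the resulting power of $M$.

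Two smaller points. First, your claim that $K\sim\sqrt{M}$ suffices for the union-bound sum $\sum_k\bbP(\text{escape on }[s_{k-1},s_k])<1$ is not justified: on the small-$k$ intervals the step size obeys only $\Delta_k\leq\delta^{-1}\log^+k+M$, so $\Delta_k$ can be of order $M$ while the margin is only of order $K$, and summing $\approx M/\delta$ terms $\rme^{-cK^2/M}$ forces $K\gtrsim\sqrt{M\log M}$ at least (the paper simply takes $M'\sim M$, yielding the stated $M^2$ without optimizing). Second, the concern about the sub-interval straddling $t/2$ is unnecessary: $u\mapsto(\wedge^t(u))^{1/2-\delta}$ is concave on all of $[0,t]$, not just on each half, so the chord argument applies uniformly.
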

\begin{proof}
Noting that all constants in this proof depend only on $\delta$, we set $\delta^{'-1} := 3\delta^{-2(1-\delta)}$ and $M' := C_0(M+1)$ where $C_0 >0$ is a constant to be determined later. We also let $u_k : = s_k \wedge (t-s_{k-1})$. Then, using stochastic monotonicity of the trajectories of $W$ under $\bbP_{0,x}^{t,y}$ in $x,y$, the left hand side in~\eqref{e:A5} is bounded above by
\begin{equation}
\nonumber
 \frac{ \bbP_{0,x}^{t,y} \big(\sup_{[0,t]} \big(W_u - \delta^{'-1} (\wedge^t(u))^{1/2-\delta} \big) \leq M' \big)}
{1 - \sum\limits_{k=1}^n 
 \bbP \Big(\sup\limits_{[s_{k-1}, s_k]} W_u >  M' \!+   \tfrac{1}{\delta'} \! u_k^{1/2-\delta}\big) 
 \, \Big| \, W_{s_{k-1}}  \! =  \!  W_{s_k} \! = \tfrac{1}{\delta}  \! u_k^{1/2-\delta}  \Big)} \,.
\end{equation}
The sum in the denominator can be further dominated by
\begin{equation}
\label{e:A6.1.1}
\begin{multlined}
\sum_{k=1}^{n-1} \bbP_{0,0}^{\Delta_k, 0 } \Big(\sup_{[0, \Delta_k]} W_u >  
	\delta^{'-1} s_{k-1}^{1/2-\delta} - \delta^{-1} s_k^{1/2-\delta} + M'\Big) \\
+ \sum_{k=1}^{n-1} \bbP_{0,0}^{\wt{\Delta}_k, 0} \Big(\sup_{[0, \wt{\Delta}_k]} W_u >  
	\delta^{'-1} \wt{s}_{k-1}^{1/2-\delta} - \delta^{-1} \wt{s}_k^{1/2-\delta} + M'\Big)\,.
\end{multlined}
\end{equation}

By conditioning on the first time $W$ reaches a point $z$ and using the reflection principle, one has $\bbP_{0,0}^{s,0} (\sup_{[0,s]} W_u \geq z) 
= \rme^{-2z^2/s}$ for all $s > 0$ and $z \geq 0$.
Moreover, using assumption (A3), it is not difficult to see that for any $C_1 > 0$ we may choose $C_0$ large enough in the definition of $M'$, so that $\delta^{'-1} s_{k-1}^{1/2-\delta} - \delta^{-1} \! s_k^{1/2-\delta} + M'$ is at least
\begin{equation}
\nonumber
\delta^{'-1} \big(\delta (k-1) -M \big )^{1/2-\delta} - \delta^{-1} \big(\delta^{-1} k + M \big)^{1/2-\delta} + M'
\geq 2(\delta^{-1} \log^+ k + M) + C_1 \,.
\end{equation}
Using also (A2), the first sum in~\eqref{e:A6.1.1} is therefore bounded above by
\begin{equation}
C \sum_{k=1}^n k^{-2} \rme^{-C_1} < \frac{1}{4}\,,
\end{equation}
since $\delta < 1$ and provided we make sure that $C_1$ is large enough.

Since $\wt{\ul{s}}$ satisfies (A1)-(A3) as well, similar reasoning shows that the second sum in~\eqref{e:A6.1.1} can also be bounded by $1/4$. But then the first display in the proof together with Proposition~\ref{p:3.6} shows that the probability in question is at most
\begin{equation}
C \frac{(x^-+M')(y^-+M')}{t} \leq
C M^2 \frac{(x^-+1)(y^-+1)}{t} \,,
\end{equation}
as desired.
\end{proof}

Next we add the collection $Y$.
\begin{lem}
Let $\delta > 0$, $M > 1$ and $t > 0$. Suppose that $\ul{s} = (s_k)_{k=0}^n$ is a sequence in $\wt{\cS}(\delta, t, M)$, that $W, Y$ are defined as in Subsection~\ref{ss:Setup} and in particular that~\eqref{e:A1} holds. Then there exists $C = C(\delta)$ such that for all $x,y \leq 0$
\begin{equation}
\label{e:A7}
\bbP_{0,x}^{t,y} \Big( \max_{k \in [1,n-1]} \big(W_{s_k} - \delta^{-1} (\wedge^t(s_k))^{\frac12-\delta} - Y_{s_k} \big) \leq 0 \Big) \leq C M^3 \frac{(x^-+1)(y^-+1)}{t} \,.
\end{equation}
\end{lem}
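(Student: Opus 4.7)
The strategy is to stochastically dominate the decorations $Y_{s_k}$ by a deterministic logarithmic envelope (up to a random shift with good tails), absorb that envelope into a barrier with a slightly larger exponent, and then invoke the previous lemma for the modified Brownian bridge problem. Since $Y_{s_k}\leq Y_{s_k}^+$ and $W$ is independent of $Y$ (conditional on $W_0,W_t$), the target probability is at most
\begin{equation}
\nonumber
\bbE\Big[\bbP_{0,x}^{t,y}\Big(\forall k\in[1,n-1]:\; W_{s_k}\leq \delta^{-1}(\wedge^t(s_k))^{\frac12-\delta}+Y_{s_k}^+\;\Big|\;Y\Big)\Big].
\end{equation}

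Fix a constant $a=a(\delta)>1/\delta$ and define the (random) shift
\begin{equation}
\nonumber
M_Y:=\max_{1\le k\le n-1}\big(Y_{s_k}^+ - a\log^+(k\wedge(n-k))\big)^+,
\end{equation}
so that $Y_{s_k}^+\leq a\log^+(k\wedge(n-k))+M_Y$ for every $k$. By assumption~\eqref{e:A1} and a union bound, the condition $a\delta>1$ makes the sum $\sum_k (k\wedge(n-k))^{-a\delta}$ finite, yielding $\bbP(M_Y>r)\leq C e^{-\delta r}$ and hence $\bbE[M_Y^2]\leq C(\delta)$.

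Next I absorb the logarithmic envelope into a polynomial barrier of the same shape but with a slightly larger exponent. Pick $\delta''\in(0,\delta)$, e.g.\ $\delta''=\delta/2$. Assumption (A3) yields $k\wedge(n-k)\leq \delta^{-1}(\wedge^t(s_k)+M)$, and combined with the elementary bound $\log^+ y\leq C y^{\delta-\delta''}$ for $y\geq 0$, one obtains constants $C_1,C_2$ depending only on $\delta$ such that
\begin{equation}
\nonumber
\delta^{-1}(\wedge^t(s_k))^{\frac12-\delta}+a\log^+(k\wedge(n-k))\leq (\delta'')^{-1}(\wedge^t(s_k))^{\frac12-\delta''}+C_1+C_2\log(1+M).
\end{equation}
Setting $s:=M_Y+C_1+C_2\log(1+M)\geq 0$, the conditional probability above is therefore dominated by $\bbP_{0,x}^{t,y}(\forall k:\; W_{s_k}\leq (\delta'')^{-1}(\wedge^t(s_k))^{\frac12-\delta''}+s)$. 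Translating the bridge by $-s$ (so $W\mapsto W-s$ is a Brownian bridge from $x-s$ to $y-s$) rewrites this as $\bbP_{0,x-s}^{t,y-s}(\forall k:\; W_{s_k}\leq (\delta'')^{-1}(\wedge^t(s_k))^{\frac12-\delta''})$.

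Finally I invoke the previous lemma. Since smaller values of the $\delta$-parameter weaken (A2)--(A3), we have $\ul{s}\in\wt{\cS}(\delta,t,M)\subseteq\wt{\cS}(\delta'',t,M)$, and since $x-s,y-s\leq 0$, applying the previous lemma with $\delta''$ in place of $\delta$ gives the conditional bound
\begin{equation}
\nonumber
CM^2\frac{((x-s)^-+1)((y-s)^-+1)}{t}
\leq CM^2(s+1)^2\frac{(x^-+1)(y^-+1)}{t},
\end{equation}
using $(x-s)^-=x^-+s$ and $(a+s+1)\leq(s+1)(a+1)$ for $a,s\geq 0$. Taking expectation over $Y$, the bound on $\bbE[M_Y^2]$ together with $(\log(1+M))^2\leq CM$ for $M\geq 1$ yields $\bbE[(s+1)^2]\leq CM$, and multiplying by $CM^2$ produces the advertised $CM^3(x^-+1)(y^-+1)/t$. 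The main technical obstacle is calibrating $a$ and $\delta''$ simultaneously: $a$ must exceed $1/\delta$ so that $M_Y$ has finite exponential moments, while $\delta''<\delta$ must be chosen so that the logarithmic distortion of the barrier fits inside the slightly inflated polynomial curve $(\delta'')^{-1}(\wedge^t)^{1/2-\delta''}$ without introducing $M$-dependence stronger than $\log M$.
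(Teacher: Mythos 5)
Your proof is correct and follows essentially the same route as the paper's: dominate the decorations by a logarithmic envelope plus a random shift with exponential tails, absorb the log into a slightly enlarged polynomial barrier, invoke the previous lemma, and average over the shift. The paper performs the averaging by decomposing into integer shift levels and summing a geometric series against the tail bound, and it absorbs the log by tripling the prefactor (to $3\delta^{-1}(\wedge^t(s_k))^{1/2-\delta}$) rather than decreasing the exponent parameter to $\delta''$; these are implementation choices and both yield the same $M^3$ scaling, namely $M^2$ from the previous lemma times an additional $M$ from the tail computation for $Y$.
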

\begin{proof}
Noting again that all constants in this proof will depend only on $\delta$, we bound the left hand side in~\eqref{e:A7} by
\begin{equation}
\label{e:A8}
\begin{multlined}
\sum_{m=1}^\infty \bbP_{0,x}^{t,y} \Big(\max_{k \in [1,n-1]}
	\big(W_{s_k} - 3\delta^{-1} (\wedge^t(s_k))^{1/2-\delta}\big) \leq m \Big) \\
	\times\bbP \Big( \max_{k \in [1,n-1]} \big(Y_{s_k} - 2\delta^{-1} \log^+ \wedge^t(s_k) \big) \in [m-1, m) \Big) \,,
\end{multlined}
\end{equation}
where we used that $(\wedge^t(s_k))^{1/2-\delta}>\log^+ \wedge^t(s_k)$. By the previous lemma, the first probability in \eqref{e:A8} is at most 
\begin{equation}\label{li.E2}
C t^{-1} M^2 (x^-+m)(y^-+m) \leq C t^{-1} M^2 m^2 (x^-+1)(y^-+1).
\end{equation} 
As for the second, using the bound on the tail of the variables in $Y$ and assumption (A3) for $\ul{s}, \wt{\ul{s}}$, we may bound it from above by
\begin{equation}
\begin{split}
\sum_{k=1}^{n-1} &\big( \bbP (Y_{s_k} \geq 2 \delta^{-1} \log^+ s_k + m-1)
+ \bbP ( Y_{\wt{s}_k} \geq 2 \delta^{-1} \log^+ \wt{s}_k + m-1) \big)\\
& \leq \delta^{-1} \sum_{k=1}^{n-1} \big( \rme^{-2 \log^+ s_k - \delta(m-1)} + 
	\rme^{-2 \log^+ s_k - \delta(m-1)} \big) \\
&\leq C \rme^{-\delta m} \sum_{k=1}^\infty \big((\delta k - M) \wedge 1\big)^{-2} 
\leq C M \rme^{- \delta m} \,.
\end{split}
\end{equation}
Using both bounds in~\eqref{e:A8} we recover~\eqref{e:A7}.
\end{proof}

The following lemma is essentially the first part of Proposition~\ref{p:A2}. 
\begin{lem}
\label{l:A1}
Let $\delta, \lambda >0$ be given and suppose that $W, Y, \cN$ are defined as in Subsection~\ref{ss:Setup} and in particular that~\eqref{e:A1} and~\eqref{e:A2} hold. Then there exists $C = C(\lambda, \delta)$ such that for all $t \geq 0$, $x,y \in \bbR$,
\begin{equation}
\label{e:197}
\bbP_{0,x}^{t,y} \Big( \max_{k:\: \sigma_k \in [0,t]} \big(W_{\sigma_k} - \delta^{-1} ( \wedge^t(\sigma_k))^{1/2-\delta} - Y_{\sigma_k}\big) \leq 0 \Big)
\leq C \frac{(x^-+1)(y^-+1)}{t} \,,
\end{equation}
\end{lem}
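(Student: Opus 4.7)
The plan is to condition on the Poisson process $\cN$, apply the preceding lemma to the (random) sequence of Poisson atoms, and average using tail bounds for Poisson process deviations.

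First I would reduce to the case $x,y \leq 0$. If $x > 0$ (or $y > 0$), then the same interpolation argument as in Remark~\ref{r:monotonicity} applies conditionally on $\sigma$ and $Y$: replacing the bridge $(W_s)$ by $(W_s - ((s/t)y + (1-s/t)x))$ only decreases the value at each $\sigma_k$, hence only increases the probability of the barrier event. So we may assume $x,y \leq 0$, in which case $(x^- + 1)(y^-+1) = (|x| + 1)(|y|+1)$.

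Next I would condition on $\cN$. Let $N := |\cN \cap [0,t]|$ and augment the sequence of atoms by $s_0 := 0$, $s_{N+1} := t$, $s_k := \sigma_k$ for $1 \le k \le N$. Then the event inside the probability coincides with $\{\max_{k \in [1,N]} (W_{s_k} - \delta^{-1}(\wedge^t(s_k))^{1/2-\delta} - Y_{s_k}) \leq 0\}$, which, given $\sigma$, is precisely the event estimated in the second lemma of this subsection (equation~\eqref{e:A7}) applied to the deterministic sequence $\ul{s}$. To feed that lemma we choose a parameter $\delta_0 \in (0,1/2) \cap (0,\lambda)$ depending only on $\lambda$ and define the good event
\begin{equation}
\nonumber
G_M := \{\ul{s} \in \wt{\cS}(\delta_0, t, M)\}.
\end{equation}
On $G_M$ the conditional probability is bounded by $C M^3 (x^-+1)(y^-+1)/t$ with $C = C(\delta,\delta_0)$.

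The core estimate is then $\bbP(G_M^c) \leq C \rme^{-c M}$ with $c,C$ depending only on $\lambda$ and $\delta_0$. The gaps $\Delta_k = \sigma_k - \sigma_{k-1}$ are i.i.d.\ exponentials of rate $\lambda$ (with $\sigma_0 := 0$), so
\begin{equation}
\nonumber
\bbP\big(\Delta_k > \delta_0^{-1} \log^+ k + M\big) \leq k^{-\lambda/\delta_0} \rme^{-\lambda M},
\end{equation}
which, since $\lambda/\delta_0 > 1$, is summable over $k$ with prefactor $\rme^{-\lambda M}$, handling (A2). For (A3), $\sigma_k$ is Gamma$(k,\lambda)$-distributed, and a standard Chernoff bound gives $\bbP(|\sigma_k - k/\lambda| > c_1 k + M) \leq 2\rme^{-c_2(k+M)}$; for $\delta_0$ small enough (in terms of $\lambda$), the events $\{\sigma_k < \delta_0 k - M\}$ and $\{\sigma_k > \delta_0^{-1}k + M\}$ are absorbed into this tail, and the sum over $k$ is again exponentially small in $M$. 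The same bounds transfer to the reversed sequence $\wt{\ul{s}}$ via the time-reversal symmetry of the Poisson process on $[0,t]$. Finally I would handle the boundary gaps $\Delta_1 = \sigma_1$ and $\wt\Delta_1 = t - \sigma_N$ separately using the exponential tails of these boundary quantities, plus a trivial bound on the event $N = 0$.

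Combining these ingredients yields
\begin{equation}
\nonumber
\bbP_{0,x}^{t,y}(\cdots) \leq \sum_{M \geq 1} \bbP(G_M \setminus G_{M-1}) \cdot \frac{C M^3 (x^-+1)(y^-+1)}{t} \leq \frac{C'(x^-+1)(y^-+1)}{t} \sum_{M \geq 1} M^3 \rme^{-cM},
\end{equation}
which is finite. The main obstacle I anticipate is bookkeeping: verifying that $\delta_0$ can be chosen as a function of $\lambda$ alone, independently of the barrier parameter $\delta$, and that the boundary terms (the first and last gap, plus the edge case of very few atoms) do not produce uncontrolled contributions. Once this is organized, everything else is a straightforward application of the preceding lemma combined with elementary Poisson tail estimates.
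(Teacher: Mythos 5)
Your strategy is essentially the paper's own: reduce to $x,y\le 0$ by monotonicity, condition on $\cN$, feed the Poisson atoms (padded by $0$ and $t$) into the preceding deterministic lemma, control the bad set via exponential and Gamma/Chernoff tails (plus the time-reversal symmetry of $\cN$ on $[0,t]$), and sum the telescoping decomposition $\sum_M M^3 \rme^{-cM}$. The paper's version of your ``good event'' decomposition is $\Delta\wt S(\delta,t,m):=\wt\cS(\delta',t,m+1)\setminus\wt\cS(\delta',t,m)$, which matches your $G_M\setminus G_{M-1}$.

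The one place your write-up goes slightly astray is the choice of $\delta_0$. You say you want $\delta_0\in(0,1/2)\cap(0,\lambda)$ depending \emph{only} on $\lambda$, and you flag as your main worry whether this can be done independently of the barrier parameter $\delta$. But making $\delta_0$ independent of $\delta$ is neither needed (the constant $C$ is allowed to depend on both $\lambda$ and $\delta$) nor correct: if $\delta$ is very small you could end up with $\delta_0>\delta$, and then the barrier $\delta_0^{-1}(\wedge^t)^{1/2-\delta_0}$ is \emph{smaller} than $\delta^{-1}(\wedge^t)^{1/2-\delta}$ for large $\wedge^t$, so the event you want to bound is no longer contained in the event that the preceding lemma controls when applied with parameter $\delta_0$. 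The correct choice is $\delta_0=\min\{\delta,\,c(\lambda)\}$ for a suitable $c(\lambda)$ (the paper takes $\delta'=\min\{\delta,\,2\lambda/3,\,1/(2\lambda)\}$), which guarantees both that $\delta_0\le\delta$ -- giving the needed monotone inclusion of barrier events -- and that the Poisson/Gamma tails are summable with an $\rme^{-cM}$ prefactor. With this fix and the bookkeeping you already anticipate, your argument is complete and coincides with the paper's.
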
 
\begin{proof}
Stochastic monotonicity of the trajectories of $W$ under $\bbP_{0,x}^{t,y}$ in $x,y$ implies that it is enough to prove the lemma for $x,y \leq 0$. Given a realization $\cN$ with $n-1 = \cN([0,t])$, we set $s_0 = 0$, $s_k = \sigma_k$ for $k=1, \dots, n-1$ and $s_n = t$. Set also $\delta' = \min \big\{\delta, 2\lambda/3, 1/(2\lambda)\big\}$ and finally let $\Delta \wt{S}(\delta, t, m) := \wt{\cS}(\delta', t, m+1) \setminus \wt{\cS}(\delta', t, m)$. Then the right hand side of~\eqref{e:197} is bounded from above by
\begin{equation}
\label{e:A9}
\begin{split}
\sum_{m=1}^\infty &
\bbP \big(\ul{s} \in \Delta \wt{S}(\delta, t, m)  \big) \\
& \times\bbP_{0,x}^{t,y} \Big( \sup_{k \in [1,n-1]} W_{s_k} - \delta^{'-1} (\wedge^t(s_k))^{1/2-\delta} - Y_{s_k} \leq 0 \,\Big|\, \ul{s} \in \Delta \wt{S}(\delta, t, m)\Big)
\end{split}
\end{equation}

By the previous lemma, we know that the second probability is at most $C(m+1)^3(x^-+1)(y^-+1)/t$ for some $C > 0$ which depends only on $\delta$. For the first probability in \eqref{e:A9}, we first observe that since $\ul{s}$ and $\wt{\ul{s}}$ have the same law, it can be bounded above by $2 \bbP \big(\ul{s} \notin \cS(\delta', t, m)\big)$. The last probability is further bounded above by
\begin{equation}
\sum_{k=1}^\infty \left( \bbP\left(\sigma_k - \sigma_{k-1} > \delta^{'-1} \log k + m\right)
+ \bbP \left(\left|\sigma_k - \frac{k}{\lambda}\right| > \frac{k}{2\lambda} + m \right)\right) \,.
\end{equation}

Since $\sigma_k - \sigma_{k-1}$ is exponentially distributed with rate $\lambda$, the first term in the sum is bounded above by $\rme^{-\lambda m} k^{-3/2}$. Using the exponential Chebychev inequality, it is not difficult to see that we may find constants $C, C' > 0$ such that the second term is at most $C \rme^{-C'(k + m)}$. The last two assertions imply that the sum and hence the first probability in~\eqref{e:A9} is exponentially decaying in $m$. Together with the bound on the second probability this shows what we wanted to prove.
\end{proof}

As a last step for the derivation of the upper bound we allow positive values for $x$.
\begin{lem}
\label{l:A2}
Let $\delta \in (0,1/2)$ and $\lambda >0$ be given and suppose that $W, Y, \cN$ are defined as in the Subsection~\ref{ss:Setup} and that in particular~\eqref{e:A1} and~\eqref{e:A2} hold. 
Then there exists $C = C(\lambda, \delta)$ such that
for all $t \geq 0$ and all $x,y \in \bbR$ with $xy \leq 0$,
\begin{equation}\label{e:A10}
\begin{multlined}
\bbP_{0,x}^{t,y} \Big( \max_{k:\: \sigma_k \in [0,t]} \big(W_{\sigma_k} - \delta^{-1}  (\wedge^t(\sigma_k))^{1/2-\delta} - Y_{\sigma_k}\big) \leq 0 \Big) \\
\leq C \frac{\big(x^- + \rme^{-\sqrt{2\lambda}(1-\delta) x^+}\big) 
	\big(y^- + \rme^{-\sqrt{2\lambda}(1-\delta) y^+}\big)}{t}
\rme^{\frac{(y-x)^2}{2t}} \,.
\end{multlined}
\end{equation}
\end{lem}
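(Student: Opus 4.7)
By the $u\mapsto t-u$ symmetry of Brownian bridges and the hypothesis $xy\le 0$, we may assume $x>0\ge y$. Set $T_0:=\inf\{u\ge 0:W_u=0\}$; by continuity of $W$ and the boundary values, $T_0\in(0,t]$ almost surely under $\bbP_{0,x}^{t,y}$. The plan is to split the barrier event as $A=A_1\cap A_2$ according as $\sigma_k\in[0,T_0]$ or $\sigma_k\in(T_0,t]$, treat $A_2$ by Lemma~\ref{l:A1} via strong Markov, and treat $A_1$ by a Feynman--Kac/Laplace-transform argument that exploits the decoration tail~\eqref{e:A1}.

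First I would pass to unconditioned Brownian motion via Bayes, $\bbP_{0,x}^{t,y}(A)=\bbP_{0,x}(A,W_t\in\rmd y)/\bbP_{0,x}(W_t\in\rmd y)$, so that the denominator $(2\pi t)^{-1/2}\rme^{-(y-x)^2/(2t)}$ supplies the Gaussian factor $\rme^{(y-x)^2/(2t)}$ appearing in~\eqref{e:A10}. Strong Markov at $T_0$, combined with the independence of $\cN$ and $Y$ from $W$, then yields
\[
\bbP_{0,x}(A,W_t\in\rmd y)/\rmd y=\int_0^t h(x,s)\,\bbP_{0,0}^{t-s,y}(A'_2)\,p_{t-s}(0,y)\,\rmd s,
\]
where $h(x,s)\,\rmd s:=\bbE_{0,x}\big[\mathbf{1}_{T_0\in\rmd s}\,\bbP(A_1\mid W)\big]$ is the first-passage density weighted by the survival of $A_1$, $A'_2$ is the barrier event on $[0,t-s]$ under the time-shifted curve $u\mapsto\gamma_{t,s+u}$, and $p_r(0,y)$ denotes the Gaussian kernel. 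Using sub-additivity of $x\mapsto x^{1/2-\delta}$ to obtain $(\wedge^t(s+u))^{1/2-\delta}\le s^{1/2-\delta}+(\wedge^{t-s}(u))^{1/2-\delta}$ and then translating $W$ downward by the constant $c=\delta^{-1}s^{1/2-\delta}$ reduces the shifted problem to the canonical setting of Lemma~\ref{l:A1}, giving
\[
\bbP_{0,0}^{t-s,y}(A'_2)\le C\,(1+s^{1/2-\delta})(y^-+1+s^{1/2-\delta})/(t-s).
\]

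For the $A_1$-factor, conditioning on $W$ and integrating out the independent Poisson process and decorations yields
\[
\bbP(A_1\mid W)=\exp\!\Big({-}\lambda\int_0^{T_0}\bbP(Y<W_u-\gamma_{t,u})\,\rmd u\Big).
\]
The tail bound~\eqref{e:A1} implies $\bbP(Y<z)\ge(1-\delta)^2$ for $z$ past a $\delta$-dependent threshold, and a Feynman--Kac comparison with Brownian motion absorbed at $0$ and killed at the constant rate $\lambda(1-\delta)^2$ then produces the Laplace-transform bound $\int_0^t h(x,s)\,\rmd s\le C\,\rme^{-\sqrt{2\lambda}(1-\delta)\,x}$. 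Substituting both estimates into the convolution and integrating against the hitting-time kernel $x\rme^{-x^2/(2s)}/(s\sqrt{2\pi s})$, with any $s^{1/2-\delta}$ powers absorbed by the exponential decay in $x$ (for large $x$ the effective range of $s$ is $\sim x/\sqrt{\lambda}$), produces the claimed bound~\eqref{e:A10}.

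The main obstacle is the Laplace-transform estimate with the sharp rate $\sqrt{2\lambda}(1-\delta)$. The killing rate $\lambda\,\bbP(Y<W_u-\gamma_{t,u})$ is only uniformly close to $\lambda$ when $W_u-\gamma_{t,u}$ is bounded away from $0$, and may degenerate near $T_0$ or at times where $\gamma_{t,u}$ attains its maximum polynomial value; rigorously controlling these ``bad'' portions so that they affect only the multiplicative constant (and not the exponential rate) will require a pathwise splitting of $[0,T_0]$ according to the value of $W_u-\gamma_{t,u}$, together with careful use of the polynomial growth bound~\eqref{e:A3} to ensure that the "bad" time has bounded Lebesgue measure with high probability.
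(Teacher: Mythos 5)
Your skeleton matches the paper's: pass to unconditioned $W$ via the Gaussian density (which supplies the $\rme^{(y-x)^2/(2t)}$ factor), decompose at a first-passage time, use the sub-additivity $(\wedge^t(s+u))^{1/2-\delta}\le s^{1/2-\delta}+(\wedge^{t-s}(u))^{1/2-\delta}$ to reduce the post-hitting piece to Lemma~\ref{l:A1}, and extract the exponential rate $\sqrt{2\lambda}(1-\delta)$ from a Laplace/saddle-point computation that trades the first-passage density against an exponential survival cost. This is genuinely the same route.

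The gap is exactly the one you flag, and it is fatal as written: you stop at $T_0=\inf\{u:W_u=0\}$, but before $T_0$ the quantity $W_u-\gamma_{t,u}$ is \emph{not} bounded below --- $W_u$ can approach $0$ while $\gamma_{t,u}=\delta^{-1}(\wedge^t(u))^{1/2-\delta}$ can be of order $u^{1/2-\delta}$, so the killing rate $\lambda\,\bbP(Y_u<W_u-\gamma_{t,u})$ degenerates (it decays like $\delta^{-1}\rme^{-\delta(\gamma_{t,u}-W_u)}$ when $W_u<\gamma_{t,u}$). Comparing with a Brownian motion killed at the \emph{constant} rate $\lambda(1-\delta)^2$ therefore does not give a valid upper bound, and the claimed Laplace-transform estimate $\int_0^t h(x,s)\,\rmd s\le C\rme^{-\sqrt{2\lambda}(1-\delta)x}$ is unjustified. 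The remedy you gesture at (a pathwise split of $[0,T_0]$ by the size of $W_u-\gamma_{t,u}$, using local-time-type control of the ``bad'' set) is not developed and would be substantially more involved than the argument in the paper.

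The paper's fix is a one-line change that eliminates the problem: stop at level $x\delta/2$ rather than at $0$, i.e.\ take $\tau:=\inf\{s\ge 0:W_s=x\delta/2\}$, and split on $\{\tau\le x^2\}$ versus $\{\tau>x^2\}$. On $\{\tau\ge s\}$ with $s\le x^2$ one has $W_u\ge x\delta/2$ for $u\le s$, while the barrier on that window is at most $\delta^{-1}s^{1/2-\delta}=O(x^{1-2\delta})=o(x)$. Hence $W_u-\gamma_{t,u}$ is bounded below by a quantity of order $x$, forcing $Y_{\sigma_k}$ to be that large at every Poisson arrival; by~\eqref{e:A1} each such event has probability at most $\delta/2$ once $x\ge x_0(\delta)$, so the survival probability is dominated by $\bbE[(\delta/2)^{\cN([0,s])}]=\rme^{-\lambda s(1-\delta/2)}$ --- a constant-rate kill, now rigorously justified. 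Combining this with the first-passage density $\bbP_{0,x}(\tau\in\rmd s)/\rmd s\le Cs^{-1/2}\rme^{-x^2(1-\delta/2)^2/(2s)}$ and the Lemma~\ref{l:A1} bound on the second factor, the saddle-point in $s$ gives rate $\sqrt{2\lambda(1-\delta/2)}(1-\delta/2)\ge\sqrt{2\lambda}(1-\delta)$, with the $s^{1/2-\delta}$ polynomial factors absorbed as you anticipate. Post-$\tau$, stochastic monotonicity lets one replace the starting point $x\delta/2$ by $0$, recovering your own reduction. So: right strategy, wrong stopping level; the fix is to hit $x\delta/2$ instead of $0$, and you then still need to handle $\tau>x^2$, $x>(\log^+t)^2$, and $x<x_0$ as separate (easy) cases, as the paper does.
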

\begin{proof}
Without loss of generality, we may assume that $x \geq 0$ and $y \leq 0$. Since the family $(Y_u)_u$ have uniform upper tails, there exists $x_0 \geq 1$ such that for all $u\geq 0$ and $x \geq x_0$ 
\begin{equation}
\label{e:A100}
\bbP \big(Y_u \geq x \tfrac{\delta}{2} - 2 \delta^{-1} x^{1/2-\delta} \big) \leq \tfrac{\delta}{2}. 
%\quad : \qquad x \geq x_0 \,,\,\, u \geq 0 \,.
\end{equation}
Suppose first that $x \in \big [x_0, (\log^+ t)^2 \big]$ and set $\tau := \inf \{s \geq 0 :\: W_s = x \tfrac{\delta}{2} \}$ and 
\begin{equation}
\label{e:674}
\cD_t(u_1, u_2) := \Big\{ \max_{k :\: \sigma_k \in [u_1, u_2]} \big( W_{\sigma_k} - \delta^{-1}  (\wedge^t(\sigma_k))^{1/2-\delta} - Y_{\sigma_k} \big) \leq 0 \Big\} \,,
\end{equation}
for $0 \leq u_1 < u_2 \leq t$. Then by conditioning on $\tau$ we may bound the probability $\bbP_{0,x}  \big( \cD_t(0,t), \, W_t \in \rmd y \big)$ by

\begin{equation}\label{e:A101}
\begin{split}
\bbP_{0,x} & \big( \tau \geq x^2 ; \, \cD_t\big(0,x^2 \big)\big) 
	\textstyle	\sup\limits_{x' \geq x\delta/2 } \bbP_{x^2, x'} \big(\cD_t\big(x^2, t\big) ; \, W_t \in \rmd y \big) \\
& + \int_{s=0}^{x^2} \bbP \big(\tau \in \rmd s ; \, \cD_t(0,s) \big) \big)\bbP_{s, x\delta/2 } \big( \cD_t(s,t) ; \, W_t \in \rmd y \big)  
\end{split}
\end{equation}

%\begin{equation}\label{e:A101}
%\begin{split}
%\bbP_{0,x}  \big( \cD_t(0,t), \, W_t \in \rmd y \big)   \leq & \int_{s=0}^{x^2} \bbP \big(\tau \in \rmd s ; \, \wt{\cA}^t_M(0,s) \big) \big)\bbP_{s, x\delta/2 } \big( \wt{\cA}^t_M(s,t) ; \, W_t \in \rmd y \big)  \\
%& + \bbP_{0,x} \big( \tau \geq x^2 ; \, \wt{\cA}^t_M\big(0,x^2 \big)\big) 	\textstyle	\sup\limits_{x' \geq x\delta/2 } \bbP_{x^2, x'} \big(\wt{\cA}^t_M\big(x^2, t\big) ; \, W_t \in \rmd y \big)
%\end{split}
%\end{equation}

Now, if $\tau \geq s$ then by definition,
\begin{equation}\label{li.E3}
W_u - \delta^{-1}  (\wedge^t(u))^{1/2-\delta} \geq x \tfrac{\delta}{2} - 2\delta^{-1}  x^{1/2-\delta}, \quad \mbox{for all $u \leq s$}.
\end{equation}
By independence between $W$ and $Y$, the first term in the integrand is bounded above by
\begin{equation}
\bbP (\tau \in \rmd s) \bbP \Big(\min_{k :\: \sigma_k \in [0,s]}
Y_{\sigma_k} \geq x \tfrac{\delta}{2} - 2 \delta^{-1} x^{1/2-\delta} \Big)
\leq \bbP (\tau \in \rmd s) \rme^{-\lambda s \big(1- \frac{\delta}{2} \big)} \,,
\end{equation}
where we have used~\eqref{e:A100} and the total probability formula, to bound the second probability on the left hand side by $\bbE (\delta/2)^{\cN([0,s])}$. At the same time, the Gaussian density implies that
\begin{equation}
\bbP_{0,x} ( \tau \in \rmd s) / \rmd s \leq \bbP_{0,x} \big( ( 2 \delta^{-1} )W_s \in \rmd x \big) / \rmd x \leq C s^{-1/2} \exp\Big(-\tfrac{x^2(1-\delta/2)^2 }{2s} \Big) \,.
\end{equation}
Similar reasoning shows that the first term in~\eqref{e:A101} is bounded up to multiplicative constants by $ \rme^{-\lambda x^2(1-\delta / 2)}$.

Finally, from concavity it follows that $(\wedge^t(u))^{1/2-\delta} \leq (\wedge^{t-s}(u-s))^{1/2-\delta} + s^{1/2-\delta}$ for all $0 \leq s \leq u \leq t$. Using this plus stochastic monotonicity of the trajectories of $W$ in the boundary conditions to replace $x\delta /2$ by $0$, the standard Gaussian density formula and shift invariance for both $W$ and $\cN$ the last term in the integrand of~\eqref{e:A101} is bounded above by $C(t-s)^{-1/2}$ times
\begin{equation}
%C(t-s)^{-1/2} 
\bbP_{0,0}^{t-s, y} \Big( \max_{k :\: \sigma_k \in [0, t-s]} \big( W_{\sigma_k} -\delta^{-1} ( \wedge^{t-s}(\sigma_k))^{1/2-\delta} - \delta^{-1} s^{1/2-\delta} \big) - Y_{s+\sigma_k}  \big) \leq 0 \Big).
\end{equation}  
Thanks to the previous lemma and since $s \leq x^2$, in the range of the integral, the integrand in~\eqref{e:A101} is further bounded above by
\begin{equation}
C \, t^{-3/2} (\delta^{-1} s^{1/2-\delta} + 1)(y^- + \delta^{-1} s^{1/2-\delta} + 1) \leq C \,  t^{-3/2} (x^2 + 1) (y^- + 1) \,.
\end{equation}
For similar reasons, the last expression can also be used to bound the supremum in~\eqref{e:A101}.
Putting all these bounds together, display~\eqref{e:A101} is bounded above by
\begin{equation}
\label{e:A200}
C \, t^{-3/2} (x^2+1) (y^- + 1) \Big(
\rme^{-\lambda x^2(1-\delta / 2)} + \int_{s=0}^\infty \rme^{-\lambda s (1-\delta / 2)} s^{-1/2} \rme^{-\frac{(x(1-\delta/2))^2}{2s}} \rmd s \Big) \,.
\end{equation}
The exponent in the integrand is maximized at 
$s=x(1-\tfrac{\delta}{2}) \big(2 \lambda (1-\tfrac{\delta}{2})\big)^{-\frac{1}{2}}$ making the integral bounded above by 
\begin{equation}
C \exp \big(-\sqrt{2 \lambda (1-\delta/2)}(1-\delta/2) x \big) \leq C \exp \big(-\sqrt{2 \lambda} (1-\delta) x \big).
\end{equation} 
Consequently, increasing $x_0$ if necessary, the probability $\bbP_{0,x}  \big( \cD_t(0,t), \, W_t \in \rmd y \big)$ is bounded by
\begin{equation}
\label{e:A202}
C \, t^{-3/2} (y^-+1) \rme^{-\sqrt{2\lambda}(1-2\delta) x} ,
\qquad \text{for all $x \in \big[x_0, (\log^+ t)^2 \big]$}.
\end{equation}
If $x > (\log^+ t)^2$ then noting that $\bbP(\tau > t,\, W_t \in \rmd y) = 0$, we bound the probability  $\bbP_{0,x}  \big( \cD_t(0,t), \, W_t \in \rmd y \big)$ by
\begin{equation}
\int_{s=0}^{t} \bbP \big(\tau \in \rmd s \,,\,\, \cD_t(0,s) \big)
\end{equation}
Proceeding as before, the integral is bounded above by $\rme^{-\sqrt{2\lambda}(1-\delta) x}$, which can be made smaller than~\eqref{e:A202} for all $t$ and $x > (\log^+ t)^2$, by modifying the constant in~\eqref{e:A202} appropriately. Dividing this bound by 
$\bbP_{0,x}(W_t \in \rmd y) = (2 \pi t)^{-1/2} \rme^{-\frac{(y-x)^2}{2t}}$ gives~\eqref{e:A10} for the case $x > x_0$. 

For $x < x_0$, we use stochastic monotonicity of $W$ in the boundary conditions under the conditional measure, to replace $x$ with $0$. This can only increase the probability in question. Then we apply the previous Lemma \ref{l:A1} to bound the left hand side of~\eqref{e:A10} by $C t^{-1}(y^-+1)$. Increasing $C$ if necessary, this can be made smaller than the right hand side of~\eqref{e:A10} for all $x \in [0, x_0]$, $y \leq 0$ and $t \geq 0$. The result follows. 
\end{proof}
The proof of Proposition~\ref{p:A2} is now straightforward.
\begin{proof}[Proof of Proposition~\ref{p:A2}]
Starting with the first upper bound, the probability on the left hand side of~\eqref{e:2.11} can be written as 
\begin{equation}
\label{e:9.1}
\bbP_{0,x'}^{t,y'} \Big( \max_{k :\:\sigma_k \in [0,t]} \big(W_{\sigma_k} - \delta^{-1}  (\wedge^t(\sigma_k))^{1/2-\delta} - Y_{\sigma_k}\big) \leq 0 \Big) \,.
\end{equation}
with $x' := x - \delta^{-1}$ and $y' := y-\delta^{-1}$. Then, thanks to 
Lemma~\ref{l:A1}, the last display is at most
\begin{equation}
\begin{split}
C\frac{(x'^-+1)(y'^-+1)}{t} & \leq C\frac{(x^-+\delta^{-1} + 1)(y^- +\delta^{-1} + 1)}{t} \\
& \leq  C \delta^{-2} \frac{(x^-+1)(y^-+1)}{t} \,,
\end{split}
\end{equation}
which proves the first statement in the proposition.

To show~\eqref{e:2.12}, we proceed similarly, using this time Lemma~\ref{l:A2} to bound the probability in~\eqref{e:9.1} by the right hand side of~\eqref{e:A10} with $x',y'$ replacing $x,y$. It is not difficult to see that the latter bound can be converted to the form appearing in the right hand side of~\eqref{e:2.12} by adjusting the constant appropriately.
\end{proof}

\subsection{Entropic Repulsion and Asymptotics}
The upper bound in Proposition~\ref{p:A2} can be used to derive the following lemmas concerning the entropic repulsion effect. The first lemma is rather sharp and  hence requires additional regularity conditions on the curve. This will be used directly to derive Proposition~\ref{p:A5}. The second lemma is much coarser and does not require any additional conditions. This lemma will be used as a key input to Proposition~\ref{p:A3}. As the arguments of both are similar, we shall only prove the first and harder of the two. 

For what follows, let us define for fixed $\epsilon > 0$ and all $t \geq 0$ the set,
 \begin{equation}
R_\epsilon(t) := \big\{(x,y) :\: x,y \leq 1/\epsilon \,,\  (x^-+1)(y^-+1) \leq t^{1-\epsilon} \big\} \,,
\end{equation}
corresponding to the range at which uniformity may be obtained. We shall also need the events
\begin{equation}
\label{e:1074a}
\cG_t(u_1, u_2) := \Big\{ \max_{k :\: \sigma_k \in [u_1, u_2]} \big( W_{\sigma_k} - \delta^{-1}  (1+\wedge^t(\sigma_k))^{1/2-\delta} - Y_{\sigma_k} \big) \leq 0 \Big\} \,,
\end{equation}
and
\begin{equation}
\label{e:1074}
\cQ_t(u_1, u_2) := \Big\{ \max_{k:\: \sigma_k \in [u_1, u_2]} \big( W_{\sigma_k} - \gamma_{t,\sigma_k} - Y_{\sigma_k} \big) \leq 0 \Big\} \,, 
\end{equation}
for $0 \leq u_1 < u_2 \leq t$. 

\begin{lem}
\label{l:10.7.0}
Let $\delta \in (0,1/2), \lambda > 0$ be given and suppose that $W, Y, \cN$ and $\gamma$ are defined as in the Subsection~\ref{ss:Setup} and that in particular~\eqref{e:A1}, ~\eqref{e:A2},~\eqref{e:A3} and~\eqref{e:A3s} hold. Then for any $\epsilon > 0$ there exists $C = C(\lambda, \delta, \epsilon)$ such that for all $M > 0$, $2 < s \leq t/2$ and $x,y \in R_\epsilon(t)$,
\begin{equation}
\bbP_{0,x}^{t,y} \Big( \max_{u \in [s, t-s]} \big(W_u - \gamma_{t,u}\big) \geq -M ,\, \cQ_t (0,t) \Big) \leq C \frac{(x^-+1)(y^-+1)}{t} \frac{(M+1)^2}{\sqrt{s}} \,.
\end{equation}
%\bbP_{0,x}^{t,y} \Big( \max_{u \in [s, t-s]} W_u - \gamma_{t,u} \geq -M ,\,\max_{k :\: \sigma_k \in [0,t]}\big(W_{\sigma_k} - \gamma_{t,\sigma_k} - Y_{\sigma_k}\big) \leq 0 \Big) \leq C \frac{(x^-+1)(y^-+1)}{t} \frac{(M+1)^2}{s^{1/2}} \,.
\end{lem}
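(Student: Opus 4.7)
The strategy is to discretize $[s,t-s]$ at unit scale, use the Markov property of the Brownian bridge at each grid point to split the path into two pieces, and apply the upper bound~\eqref{e:2.11} of Proposition~\ref{p:A2} to each piece. The factor $(M+1)^2$ in the target bound will come from the product of two ``one-bridge'' factors of Proposition~\ref{p:A2} (one per side), since after the standard linear shift each sub-bridge has an endpoint of order $-M$ below its shifted barrier; the factor $1/(t\sqrt{s})$ will arise from summing the bridge transition density $\sqrt{t/(u_k(t-u_k))}$ multiplied by $1/(u_k(t-u_k))$ over the grid, which integrates, via $\int_s^{t-s}(u(t-u))^{-3/2}\rmd u \leq C/(t^{3/2}\sqrt{s})$, to the desired order.

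Setting $u_k := s + k$ for $0 \leq k \leq \lfloor t-2s \rfloor$, the first step is to argue that on the event $\{\max_{[s,t-s]}(W-\gamma_t) \geq -M\}$ the first-crossing time $\tau_M := \inf\{u \geq s : W_u - \gamma_{t,u} \geq -M\}$ belongs to some $[u_k, u_{k+1})$. By continuity of $W$ together with the Holder-type regularity of $\gamma_{t,\cdot}$ implied by~\eqref{e:A3s}, $W_{u_k}$ then differs from $\gamma_{t,u_k} - M$ by at most a Brownian fluctuation over a unit interval. A union bound over $k$ and a dyadic decomposition of this fluctuation size $L \geq 0$, weighted by the Gaussian tail $\rme^{-cL^2}$ of a unit-time displacement, reduces the problem to controlling $\sum_k \sum_L \rme^{-cL^2} \bbP_{0,x}^{t,y}(W_{u_k} \in I_{k,L}, \cQ_t(0,t))$, where $I_{k,L}$ is a unit interval centered near $\gamma_{t,u_k} - M - L$.

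For each such term, the Markov property at $u_k$ factorizes
\begin{equation*}
\bbP_{0,x}^{t,y}(W_{u_k} \in \rmd h, \cQ_t(0,t)) = \bbP_{0,x}^{u_k,h}(\cQ_t(0,u_k)) \cdot \bbP_{u_k,h}^{t,y}(\cQ_t(u_k,t)) \cdot p_{u_k}(x,h;t,y) \rmd h,
\end{equation*}
with bridge density bounded by $p_{u_k}(x,h;t,y) \leq \sqrt{t/(2\pi u_k(t-u_k))}$. Shifting the left sub-bridge by $u \mapsto (u/u_k)\gamma_{t,u_k}$ (and the right one by $u \mapsto ((t-u)/(t-u_k))\gamma_{t,u_k}$) produces Brownian bridges on $[0,u_k]$ and $[u_k,t]$ whose barrier curves, thanks to~\eqref{e:A3s}, are dominated by $\delta^{-1}(1+(\wedge^{u_k}(\cdot))^{1/2-\delta})$ and $\delta^{-1}(1+(\wedge^{t-u_k}(\cdot))^{1/2-\delta})$ respectively, making~\eqref{e:2.11} directly applicable with the bounds $C(x^-+1)((h-\gamma_{t,u_k})^-+1)/u_k$ and $C((h-\gamma_{t,u_k})^-+1)(y^-+1)/(t-u_k)$. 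Since $(h-\gamma_{t,u_k})^-$ is of order $M+L$ on $I_{k,L}$, integrating over $h$ and summing gives
\begin{equation*}
\sum_{k,L} \rme^{-cL^2} \cdot \frac{C(x^-+1)(y^-+1)(M+L+1)^2 \sqrt{t}}{(u_k(t-u_k))^{3/2}} \leq \frac{C'(x^-+1)(y^-+1)(M+1)^2}{t\sqrt{s}},
\end{equation*}
using $\sum_L \rme^{-cL^2}(M+L+1)^2 \leq C(M+1)^2$ together with the integral estimate above.

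The main obstacle I expect is a clean execution of the discretization step, namely decoupling the event $\{W_{u_k} \in I_{k,L}\} \cap \cQ_t(0,t)$ from the local unit-interval fluctuation of $W-\gamma_t$ in such a way that the Gaussian penalty $\rme^{-cL^2}$ and the bridge factorization are both usable. The natural remedy is to condition also on $W_{u_{k+1}}$, so that the local fluctuation becomes that of an independent unit-time Brownian bridge whose tails can be absorbed into $\rme^{-cL^2}$ without affecting the factorized estimates for $\cQ_t(0,u_k)$ and $\cQ_t(u_{k+1},t)$.
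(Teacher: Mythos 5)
Your strategy is essentially the paper's: factor the bridge at unit-spaced grid points via the Markov property, linearly shift so that~\eqref{e:A3s} makes Proposition~\ref{p:A2} applicable to each sub-bridge, control the unit-interval crossing probability separately, and sum $\sum_r (r\wedge(t-r))^{-3/2}\lesssim s^{-1/2}$. You also correctly identify the key technical obstruction — that the unit-interval crossing event and the right sub-bridge event $\cQ_t(u_k,t)$ overlap — and the correct fix (condition on both $W_{u_k}$ and $W_{u_{k+1}}$, so the middle piece becomes an independent unit-time bridge whose barrier-crossing probability supplies the Gaussian penalty). This is precisely what the paper does: it bounds, for each $r$, the probability that $W_r$ or $W_{r+1}$ already exceeds $\gamma-M$ (display~\eqref{e:M7.42}) and, separately, the probability that both grid values lie below $\gamma-M$ yet the unit bridge between them exceeds $\gamma-M$ (display~\eqref{e:M7.45}); summing over $r$ then closes the argument.

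One oversight to be aware of: you write that only~\eqref{e:2.11} is needed, but on $\cQ_t$ the path $W$ is constrained \emph{only at the Poisson points} (and there by $\gamma+Y$, which can be large), so the deterministic grid value $W_{u_{k+1}}$ may well lie \emph{above} the barrier $\gamma_{t,u_{k+1}}$. When that happens, the right sub-bridge starts at a point with $w':=W_{u_{k+1}}-\gamma_{t,u_{k+1}}>0$, and~\eqref{e:2.11} returns a bound without any decay in $w'$. The paper's~\eqref{e:M7.42} handles this range by invoking~\eqref{e:2.12}, whose exponential factor $\rme^{-\sqrt{2\lambda}(1-\delta)w'^+}$ (combined with the Gaussian decay of the transition density) makes the integral over $w'>0$ converge. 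Your $\tau_M$-based decomposition guarantees $W_{u_k}<\gamma_{t,u_k}-M$ but says nothing about $W_{u_{k+1}}$, so you either need~\eqref{e:2.12} for the right sub-bridge, or you must instead harvest the joint-density penalty $\rme^{-c(z-w)^2}$ (which ties $w$ to $z$ up to the Holder-regularity constant of $\gamma_{t,\cdot}$) — the route the paper takes in~\eqref{e:M7.45}, but which it supplements with~\eqref{e:M7.42} to cover the grid points where $W$ is already near or above the barrier. Either way, a complete write-up cannot rest on~\eqref{e:2.11} alone.
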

\begin{proof}
We shall show that for all $r \in [s-1, t-s]$,
\begin{equation}
\label{e:M7.34}
\bbP_{0,x}^{t,y} \Big(\cQ_t(0,t) ,\, \max_{q \in [r, r+1]} (W_q - \gamma_{t,q}) \geq -M) 
\leq C \frac{(x^-+1)(y^-+1)}{t(r \wedge (t-r))^{3/2}} (M+1)^2 \,.
\end{equation}
Then summing both sides from $r=\lfloor s \rfloor $ to $r=\lceil t-s \rceil - 1$ and using the union bound will yield the desired statement.

To this end, for any $r \in [s-1 \,,\, t/2-1]$ and $z \in \bbR$, we may write
\begin{equation}
\label{e:493}
\bbP_{0,x}^{t,y} \big(\cQ_t(0,t) ,\, W_r \in \rmd z\big)
= \bbP_{0,x}^{r,z} \big(\cQ_t(0,r) \big) 
\times \bbP_{r,z}^{t,y} \big(\cQ_t(r,t) \big) 
\times \bbP_{0,x}^{t,y} \big(W_r \in \rmd z \big) \,.
\end{equation}
Since $W_u$ is Gaussian, we may replace it by $W_u +  \frac{u}{r} \gamma_{t,r}$ everywhere in the first probability on the right hand side (including the conditioning). Then we use the first part of Condition~\eqref{e:A3s} to obtain
\begin{equation}
\label{e:195B}
\bbP_{0,x}^{r,z} \big(\cQ_t(0,r) \big) \leq \bbP_{0,x}^{r,z'} \big(\cG_r(0,r)\big) \,,
\end{equation}
where $z' = z - \gamma_{t,r}$.
Similarly, replacing $W_u$ by $W_u + \gamma_{t,r}(t-u)/(t-r)$, using the shift invariance of $W$ and $\cN$ and the second part of condition~\eqref{e:A3s} show that 
\begin{equation}
\label{e:196B}
\bbP_{r,z}^{t,y} \big(\cQ_t(r,t) \big) \leq \bbP_{0,z'}^{t-r,y} \big(\cG_{t-r}(0,t-r)\big) \,,
\end{equation}
with $z'$ as before.
Now, if $z' \leq 0$ we can use the first upper bound in Proposition~\ref{p:A2} to bound the product of both probabilities above by,
\begin{equation}
\label{e:196BA}
C \frac{(x^-+1)(z'^-+1)^2(y^-+1)}{r(t-r)} \,.
\end{equation}
If $z' > 0$ and $x \leq y$, we use the first upper bound in Proposition~\ref{p:A2} for the right hand side of~\eqref{e:195B} and the second upper bound in the proposition for the right hand side of~\eqref{e:196B}. In this case the product of the two probabilities is upper bounded by
\begin{equation}
\label{e:197B}
C \frac{(x^-+1)(y^-+1)}{r(t-r)} \rme^{-Cz'} \exp \Big(\tfrac{(z'+y^-)^2}{2(t-r)}\Big) \,.
\end{equation}
Above, we have also used the stochastic monotonicity of $W$ with respect to the initial conditions, to replace $y$ by $y \wedge 0 = -y^-$ before applying the upper bound. If $z' > 0$ but $x > y$, we use the bounds in the opposite way, thereby obtaining the bound
\begin{equation}
\label{e:198B}
C \frac{(x^-+1)(y^-+1)}{r(t-r)} \rme^{-Cz'} \exp \Big(\tfrac{(z'+x^-)^2}{2r} \Big) \,,
\end{equation}
on the product of the probabilities.

Next we bound $\bbP_{0,x}^{t,y} (W_r \in \rmd z)$.
Since $W_r$ under $\bbP_{0,x}^{t,y}$ is Gaussian with variance $r(t-r)/t \in [\wedge^t(r)/2,\, \wedge^t(r)]$ and mean $t^{-1} \big(xr + y (t-r)\big)$, we can bound
\begin{equation}
\frac{\bbP_{0,x}^{t,y}(W_r \in \rmd z)}{\rmd z}
\leq \begin{cases}
	C r^{-1/2} &  \text{if } z' \leq 0 \,, \\
	C r^{-1/2} \exp \Big(-\tfrac{(z-(x \vee y))^2}{2\wedge^t(r)} \Big)
		& \text{if } z' > 0 \,.
\end{cases}
\end{equation}
where $C$ may depend on $\epsilon$. Multiplying the this by either~\eqref{e:196BA},~\eqref{e:197B} or~\eqref{e:198B} as prescribed above, we obtain
\begin{equation}
\bbP_{0,x}^{t,y} \big(\cQ_t(0,t) ,\, W_r \in \rmd z\big) / \rmd z \leq
C \frac{(x^-+1)(y^-+1)}{t r^{3/2}} (z'^-+1)^2 \rme^{-Cz'^+} \,,
\end{equation}
which is valid for all $z'$. Finally, integrating the left hand side above from $\gamma_{t,r}-M$, with $M \geq 1$ to $\infty$ gives
\begin{equation}
\label{e:M7.42}
\bbP_{0,x}^{t,y} \big( W_r \geq \gamma_{t,r} -M \,, \cQ_t(0,t)\big) \leq C
\frac{(x^-+1)(y^-+1)}{tr^{3/2}} (M+1)^2 \,.
\end{equation}

In a similar way, if $r \in [s-1 \,,\, t/2-1]$ and $z,w \in \bbR$ satisfy $z \leq \gamma_{t,r}$ and $w \leq \gamma_{t,r+1}$, we can bound above $\bbP_{0,x}^{t,y} \big(\cQ_t(0,t) \,,\,\, W_r \in \rmd z\,,\,\, W_{r+1} \in \rmd w\big)$ by
\begin{equation}\begin{multlined}
\bbP_{0,x}^{r,z} \big(\cQ_t(0,r) \big) 
\times \bbP_{r+1,w}^{t,y} \big(\cQ_t(r+1,t) \big) 
\times \bbP_{0,x}^{t,y} \big(W_r \in \rmd z,\, W_{r+1} \in \rmd w \big) \\
\leq C \frac{(x^-+1)(y^-+1)(z'^-+1)(w'^-+1) \rme^{-C'(z-w)^2}}{tr^{3/2}} \rmd z \rmd w\,,
\end{multlined}\end{equation}
where we let $z' = z- \gamma_{t,r}$, $w' = w - \gamma_{t,r+1}$ and bounded the probability on the left hand side by $Cr^{-1/2} \rme^{-C(z-w)^2}$ using the assumptions on $x$ and $y$. At the same time, it follows from~\eqref{e:A3s} that $|\gamma_{t,r} - \gamma_{t,q}| \leq C$ for all $1 \leq r \leq q \leq r+1 \leq t-1$ for some $C = C(\delta)$. Therefore, for $r,z,w$ as above, the reflection principle for Brownian motion together with stochastic monotonicity imply
\begin{equation}
\bbP_{r,z}^{r+1,w} \Big(\max_{q \in[r,r+1]} W_q - \gamma_{t,q} \geq -M \Big)
\leq \bbP_{0, z' \vee w'}^{1, z' \vee w'} \big( \max_{q \in [0,1]} W_q \geq -M - C \big) \,,
\end{equation}
which is at most $C \rme^{-C' (-M -z' \vee w')^2}$. Using the last two bounds, we may write for all $r \in [s, t/2]$,
\begin{equation}
\begin{split}
\label{e:M7.45}
\bbP_{0,x}^{t,y} &\Big(\cQ_t(0,t) \,,\,\, \max_{q \in [r, r+1]} (W_q - \gamma_{t,q}) \geq -M \,,\,\,
\max_{k=0,1} \big(W_{r+k} -\gamma_{t,r+k}\big) \leq -M  \Big) \\
& \leq \int  
\bbP_{0,x}^{t,y} \big(\cQ_t(0,t) ,\, W_r \in \rmd z, W_{r+1} \in \rmd w\big) \\
& \qquad \qquad \bbP_{r,z}^{r+1,w} \Big(\max_{q \in [r, r+1]} \big(W_q - \gamma_{t,q}\big) \geq -M \Big) \\
& \leq C \frac{(x^-\!+1)(y^-\!+1)}{tr^{3/2}}
\int (z'^-\!+1)(w'^-\!+1) \rme^{-C' ((z'-w')^2 + (M + z' \vee w')^2)} \rmd z \rmd w  \\
& \leq C \frac{(x^-+1)(y^-+1)}{tr^{3/2}}(M+1)^2 \,,
\end{split}
\end{equation}
where $z \leq \gamma_{t,r} - M$, $w \leq \gamma_{t,r+1}-M$ is the range of integration in both integrals and we have also used that $|z-w| > |z'-w'|-C$ which follows from the bound on the difference $|\gamma_{t,r}-\gamma_{t,r+1}|$ as shown above.

Summing the last display together with display~\eqref{e:M7.42} twice -- for $r$ and $r+1$, and using the union bound, we get~\eqref{e:M7.34} for $r \in [s-1 \,,\, t/2-1]$. A symmetric argument will show the same for $r \in [t/2 \,,\, t-s]$.
\end{proof}

The second entropic repulsion result is provided in the following lemma. 
\begin{lem}
\label{l:10.7}
Let $\delta \in (0,1/2), \lambda > 0$ be given and suppose that $W, Y, \cN$ and $\gamma$ are defined as in the Subsection~\ref{ss:Setup} and that in particular~\eqref{e:A1} and~\eqref{e:A2} hold. Then for any $\epsilon > 0$ and $M > 0$, there exists a positive constant $C = C(\lambda, \delta, \epsilon, M)$ such that for all $2 < s \leq t/2$ and $x,y \in R_\epsilon(t)$,
\begin{multline}
\nonumber
\bbP_{0,x}^{t,y} \bigg( 
\! \Big\{ \textstyle \max\limits_{\sigma_k \in [0,t]} 
	W_{\sigma_k} - \delta^{-1} \big(1+ (\wedge^t(\sigma_k))^{1/2-\delta}\big) - Y^+_{\sigma_k} \leq 0 \Big\} \, \cap  \\
 \Big(\big\{ \max\limits_{[s,t-s]} W_u \geq 0 \big\} 
\cup 
	\big\{\max\limits_{\sigma_k \in [s,t-s]} W_{\sigma_k} \! + \! Y_{\sigma_k}^- \geq -M \big\} \Big) \bigg) 
		\displaystyle \leq C \frac{(x^- \! + \! 1)(y^- \! + \! 1)}{t s^{1/4}}\,.
\end{multline}
\end{lem}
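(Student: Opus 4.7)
The argument mirrors the proof of Lemma~\ref{l:10.7.0}, but employs coarser estimates at one step, since the regularity condition~\eqref{e:A3s} is no longer available. As a first step, the union bound separates the target probability into $\bbP_{0,x}^{t,y}(E_1 \cap E_2) + \bbP_{0,x}^{t,y}(E_1 \cap E_3)$, where $E_1$, $E_2$, $E_3$ denote respectively the $Y^+$-decorated Poisson-time barrier event, the event $\{\max_{[s,t-s]} W_u \geq 0\}$, and the event $\{\max_{\sigma_k \in [s,t-s]} (W_{\sigma_k} + Y^-_{\sigma_k}) \geq -M\}$. Since $Y^-_u$ inherits an exponential upper tail from~\eqref{e:A1}, a dyadic decomposition in the height of $Y^-$ (in the style of the argument preceding Lemma~\ref{l:A1}) reduces $\bbP(E_1 \cap E_3)$ to a bound of the same form as $\bbP(E_1 \cap E_2)$, at the cost of an $M$-dependent multiplicative constant.

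Following the blueprint of~\eqref{e:493}--\eqref{e:M7.45}, I would union-bound over integer times $r \in [\lfloor s \rfloor, \lceil t - s \rceil]$ to reduce the remaining estimate to a pointwise bound of the form
\begin{equation*}
\bbP_{0,x}^{t,y}\bigl(E_1,\ \max_{q \in [r, r+1]} W_q \geq 0\bigr) \leq C\, \frac{(x^-+1)(y^-+1)}{t\,(r \wedge (t-r))^{5/4}}\,,
\end{equation*}
whose summation over $r$ between $s$ and $t-s$ produces the required $1/(t s^{1/4})$ factor. To establish this pointwise bound, I would condition on $(W_r, W_{r+1}) = (z, w)$, apply the Markov property to split $E_1$ into independent Poisson events on $[0, r]$, $[r, r+1]$, and $[r+1, t]$, use the reflection-principle inequality $\bbP_{r,z}^{r+1,w}(\max W \geq 0) \leq \rme^{-2z^- w^-}$ (with boundary cases $z \geq 0$ or $w \geq 0$ treated separately), and integrate against the joint Gaussian density of $(W_r, W_{r+1})$.

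The principal obstacle lies in the outer sub-probabilities $\bbP_{0,x}^{r,z}(E_1|_{[0,r]})$ and $\bbP_{r+1,w}^{t,y}(E_1|_{[r+1,t]})$. Without~\eqref{e:A3s}, the restriction of $b(u) := \delta^{-1}(1 + (\wedge^t(u))^{1/2-\delta})$ to, say, $[0, r]$ with $r \leq t/2$ is monotonically increasing in $u$ and strictly dominates the Proposition~\ref{p:A2} barrier $\delta^{-1}(1 + (\wedge^r(u))^{1/2-\delta})$, so that the proposition bounds only a smaller event and hence gives the inequality in the wrong direction. A cruder estimate is therefore required: for instance, one may split the sub-interval at $r/2$ (where the two barriers do coincide), apply Proposition~\ref{p:A2} on $[0, r/2]$, and absorb the contribution from $[r/2, r]$ via integration against the Gaussian density of $W_{r/2}$, using also the polynomial growth of $b$ to keep the integrand controlled. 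The resulting bound is weaker than the $r^{-3/2}$ afforded by~\eqref{e:A3s} in Lemma~\ref{l:10.7.0} by a factor of $r^{1/4}$, which accounts precisely for the degradation from $s^{-1/2}$ there to $s^{-1/4}$ in the conclusion here.
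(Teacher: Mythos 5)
The paper itself omits the proof of Lemma~\ref{l:10.7}, simply asserting that ``the proof uses similar arguments as in the proof of the previous lemma'' (Lemma~\ref{l:10.7.0}). So your instinct to follow that blueprint is right, and the overall skeleton you lay out (union bound into the $E_1 \cap E_2$ and $E_1 \cap E_3$ pieces, then a decomposition in the height of $Y^-$, then the per-$r$ conditioning argument of~\eqref{e:493}--\eqref{e:M7.45}) is in the spirit of the intended argument.

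However, the ``principal obstacle'' you identify is not actually there, and your proposed workaround is built on a false premise. You say that without the regularity condition~\eqref{e:A3s} the restriction of $b(u) := \delta^{-1}\bigl(1 + (\wedge^t(u))^{1/2-\delta}\bigr)$ to $[0,r]$ strictly dominates the Proposition~\ref{p:A2} barrier and therefore the proposition ``bounds only a smaller event.'' But you have skipped the tent-shift step from Lemma~\ref{l:10.7.0}: the probability $\bbP_{0,x}^{r,z}$ is compared with the barrier only \emph{after} subtracting the linear interpolation $\frac{u}{r}b(r)$, as in~\eqref{e:195B}. What one must check is therefore the inequality
\begin{equation*}
b(u) - \frac{u}{r}\, b(r) \;\leq\; \delta^{-1}\bigl(1 + (\wedge^r(u))^{1/2-\delta}\bigr)
\qquad\text{for } 0 < u < r \leq t/2,
\end{equation*}
which is exactly the first half of~\eqref{e:A3s} applied to the particular curve $b$. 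This \emph{does} hold. Indeed, for $u \leq r/2$ both sides reduce to $\delta^{-1}(1 + u^{1/2-\delta})$ on the right while the left has two extra nonpositive terms $-u/r$ and $-(u/r)r^{1/2-\delta}$; for $u \in (r/2, r)$, writing $v = r - u \leq r/2$ one finds the left side is at most $\delta^{-1}\bigl(v/r + v\,r^{-(1/2+\delta)}\bigr) = \delta^{-1}\bigl(v/r + v^{1/2-\delta}(v/r)^{1/2+\delta}\bigr) \leq \delta^{-1}(1 + v^{1/2-\delta})$. The symmetric bound on $[r,t]$ is analogous. So the explicit maximal curve $b$ satisfies~\eqref{e:A3s}, and the tent argument of Lemma~\ref{l:10.7.0} applies verbatim to $E_1$: there is no need to split the interval at $r/2$, and your claimed $r^{-5/4}$ per-step bound is not substantiated (nor is a route to it sketched in enough detail to assess). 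The actual reason the stated exponent is $s^{-1/4}$ rather than $s^{-1/2}$ must lie elsewhere --- most plausibly in the handling of $E_3$, where one has a union over all Poisson points in $[s,t-s]$ with $Y^-_{\sigma_k}$ large, or in a deliberate coarsening (e.g.\ a Cauchy--Schwarz step) --- and your proposal does not identify that source.

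Two smaller points. First, you write ``dyadic decomposition,'' but the natural and sufficient decomposition here is the arithmetic one in $m = \lfloor Y^-_{\sigma_k}\rfloor$ with exponential tail weights, exactly as in the argument preceding Lemma~\ref{l:A1}; the dyadic version is not wrong, but the phrasing suggests you have not pinned down how the union over Poisson times interacts with the height decomposition, which is where the real work in the $E_3$ term lies. Second, the barrier in $E_1$ is decorated with $Y^+_{\sigma_k}$ rather than $Y_{\sigma_k}$, and you should note explicitly that $Y^+$ still satisfies~\eqref{e:A1} (since $|Y^+|\leq|Y|$), so Proposition~\ref{p:A2} applies without modification.
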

\begin{proof}
The proof uses similar arguments as in the proof of the previous lemma. We therefore omit it.
\end{proof}

We now turn to the asymptotic statements in Subsection~\ref{ss:Setup}, namely Propositions~\ref{p:A3} and~\ref{p:A4}. In analog to $\cQ_t$ from~\eqref{e:1074} let us define for $0 \leq u_1 < u_2 < t$ the event,
\begin{equation}
\label{e:1074.1}
\cQ^-_t(u_1, u_2) := \Big\{ \max_{\sigma_k \in [u_1, u_2]} \big( W_{\sigma_k} - \gamma_{t,t-\sigma_k} - Y_{t-\sigma_k} \big) \leq 0 \Big\} \,.
\end{equation}
We shall also need the $t=\infty$ analogs of $\cQ_t(u_1, u_2)$ and $\cQ_t^-(u_1, u_2)$, which we define as
%\begin{equation}
%\label{e:692}
%\cQ_\infty(u_1, u_2) := \Big\{ \max_{\sigma_k \in [u_1, u_2]} \big( W_{\sigma_k} - \gamma_{\infty,\sigma_k} - Y_{\sigma_k} \big) \leq 0 \Big\} 
%\end{equation}
%for all $0 \leq u_1 < u_2$ and 
%\begin{equation}
%\cQ^-_\infty(u_1, u_2) := \Big\{ \max_{\sigma_k \in [u_1, u_2]} \big( W_{\sigma_k} - \gamma_{\infty,-\sigma_k} - Y_{\infty}^{(\sigma_k)} \big) \leq 0 \Big\} \,,
%\end{equation}
\begin{equation}
\label{e:692}
\begin{split}
\cQ_\infty & (u_1, u_2)  := \Big\{ \max_{\sigma_k \in [u_1, u_2]} \big( W_{\sigma_k} - \gamma_{\infty,\sigma_k} - Y_{\sigma_k} \big) \leq 0 \Big\} ; \\
& \text{and} \quad
\cQ^-_\infty(u_1, u_2) := \Big\{ \max_{\sigma_k \in [u_1, u_2]} \big( W_{\sigma_k} - \gamma_{\infty,-\sigma_k} - Y_{\infty}^{(\sigma_k)} \big) \leq 0 \Big\} \,,
\end{split}
\end{equation}
where $0 \leq u_1 < u_2$ and $(Y_\infty^{(u)} :\: u \geq 0)$ is an independent collection of random variables with $Y^{(u)}_\infty \overset{law}= Y_\infty$, which we assume to be defined on the same probability space as $W, \cN, Y$ and independent of $W$ and $\cN$. Finally, we shall also need
\begin{equation}
\label{e:795}
f_s(x) := \bbE_{0,x} \big(W_s^- ;\; \cQ_\infty(0,s)\big) \ ,\ \ 
g_s(y) := \bbE_{0,y} \big(W_s^- ;\; \cQ^-_\infty(0,s)\big) \,,
 \end{equation}
for any $s \geq 0$ and $x,y \in \bbR$.
The proof of Proposition~\ref{p:A3} will rely on the following lemmas. 
\begin{lem}
\label{l:10.9}
Let $\delta \in (0,1/2), \lambda >0$ and suppose that $W, Y, \cN$ and $\gamma$ are defined as in Subsection~\ref{ss:Setup} and in particular that~\eqref{e:A1},~\eqref{e:A2},~\eqref{e:A3},~\eqref{e:A2.5} and~\eqref{e:A4} hold. Then for any $\epsilon > 0$,
\begin{equation}
\label{e:196}
\lim_{s \to \infty} \limsup_{t \to \infty} \sup_{x,y \in R_\epsilon(t)}
\frac{t}{(x^-+1)(y^-+1)}
\left|\bbP_{0,x}^{t,y} \big( \cQ_t(0,t) \big)
- \frac{2f_s(x) g_s(y)}{t} \right| = 0 \,.
\end{equation}
\end{lem}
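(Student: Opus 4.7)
My plan is to prove Lemma~\ref{l:10.9} by splitting $[0,t]$ into three pieces: a left boundary layer $[0,s]$, a right boundary layer $[t-s,t]$, and a bulk $[s,t-s]$. Conditioning on $(W_s,W_{t-s})=(z_1,z_2)$ and using the Markov property factorises $\cQ_t(0,t)$ as
\begin{equation}
\nonumber
\bbP_{0,x}^{s,z_1}\!\big(\cQ_t(0,s)\big)\cdot \bbP_{s,z_1}^{t-s,z_2}\!\big(\cQ_t(s,t-s)\big)\cdot \bbP_{t-s,z_2}^{t,y}\!\big(\cQ_t(t-s,t)\big)\,,
\end{equation}
and the joint density of $(W_s,W_{t-s})$ under $\bbP_{0,x}^{t,y}$ is the Gaussian ratio $p_s(x,z_1)\,p_{t-2s}(z_1,z_2)\,p_s(z_2,y)/p_t(x,y)$, where $p_r$ is the heat kernel. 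The constraint $(x,y)\in R_\epsilon(t)$ forces $(y-x)^2=o(t)$, so for fixed $s$ and compact $(z_1,z_2)$ this ratio is asymptotic to $p_s(x,z_1)\,p_s(z_2,y)$ as $t\to\infty$, uniformly in $(x,y)$.

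I then identify the limit of each factor on the ``good'' region $G_K:=\{z_1,z_2\leq -K\}$ for large $K$. For the left factor, hypotheses~\eqref{e:A2.5} and~\eqref{e:A4}, combined with the boundedness~\eqref{e:A3} of $\gamma_{t,\cdot}$ on $[0,s]$, yield via dominated convergence that $p_s(x,z_1)\,\bbP_{0,x}^{s,z_1}(\cQ_t(0,s))$ converges to $\bbP_{0,x}(\cQ_\infty(0,s),W_s\in\rmd z_1)/\rmd z_1$ as $t\to\infty$, uniformly on compacts in $z_1$. The analogous right-factor limit, producing the density for $\cQ^-_\infty(0,s)$ starting from $y$, follows by time-reversing the bridge on $[t-s,t]$, using the translation invariance of the Poisson law and the fact that $Y_{t-u}\Rightarrow Y_\infty$ produces the iid copies $Y^{(\sigma_k)}_\infty$ appearing in the definition~\eqref{e:692} of $\cQ^-_\infty$. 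For the bulk, when $z_1,z_2\leq -K$ the curve $\gamma_{t,u}$ stays well above $z_1\vee z_2$ throughout $[s,t-s]$, so by the classical Brownian-bridge Ballot theorem combined with Proposition~\ref{p:3.6} on an auxiliary dominating curve (and with the Poissonian subsampling and decorations shown negligible via~\eqref{e:A1} and~\eqref{e:A2}),
\begin{equation}
\nonumber
\bbP_{s,z_1}^{t-s,z_2}\!\big(\cQ_t(s,t-s)\big)\;\sim\;\frac{2\,z_1^-\,z_2^-}{t}\,,\qquad t\to\infty\,.
\end{equation}
Multiplying and integrating over $G_K$ yields $(2/t)\int\!\!\int_{G_K} z_1^- z_2^-\,\bbP_{0,x}(\cQ_\infty(0,s),W_s\in\rmd z_1)\,\bbP_{0,y}(\cQ^-_\infty(0,s),W_s\in\rmd z_2)$, which converges to $2f_s(x)g_s(y)/t$ as $K\to\infty$ by~\eqref{e:795}.

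The main obstacle, and the reason the double limit $\lim_{s\to\infty}\limsup_{t\to\infty}$ is required rather than a single limit in $t$, is showing that the ``bad'' complementary region $G_K^c$ contributes a quantity which, multiplied by $t/((x^-+1)(y^-+1))$, vanishes as first $t\to\infty$ then $s,K\to\infty$, uniformly in $(x,y)\in R_\epsilon(t)$. This is precisely the output of the coarse entropic repulsion Lemma~\ref{l:10.7}: an $O((x^-+1)(y^-+1)/(ts^{1/4}))$ bound on the joint probability of $\cQ_t(0,t)$ with $W$ reaching $0$ or $W+Y^-$ reaching $-M$ somewhere in the bulk, which is of strictly smaller order than the main term $(x^-+1)(y^-+1)/t$ as $s\to\infty$. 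A symmetric bound on the bad-region contribution to the target $2f_s(x)g_s(y)/t$ follows from the definition of $f_s,g_s$ together with the estimates underlying Proposition~\ref{p:3.6}.
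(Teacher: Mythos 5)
Your overall architecture — condition at times $s$ and $t-s$, Markov-factorize into left boundary layer, bulk, and right boundary layer, recognize the boundary factors as producing $f_s$ and $g_s$, extract a Ballot-type $2/t$ from the bulk, and use the coarse entropic repulsion (Lemma~\ref{l:10.7}) to control the error — is the correct high-level strategy, and it coincides with the paper's. However, there are two genuine gaps.

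The main gap is the bulk asymptotic. You claim $\bbP_{s,z_1}^{t-s,z_2}\big(\cQ_t(s,t-s)\big)\sim 2z_1^-z_2^-/t$ ``by the classical Brownian-bridge Ballot theorem combined with Proposition~\ref{p:3.6}''. This does not follow: Proposition~\ref{p:3.6} gives only two-sided bounds with unspecified constants, and the event $\cQ_t(s,t-s)$ allows the bridge to go above zero at non-Poisson times (and up to the decorated curve even at Poisson times), so for fixed $z_1,z_2$ its $t\to\infty$ asymptotic is of the form $2a_s(z_1)b_s(z_2)/t$ with $a_s,b_s$ analogous to $f_s,g_s$ and depending on $\gamma$ and $Y$ near $u=s$ — not the bare Ballot constant. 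In other words, the bulk factor you wish to evaluate is essentially the same problem as Lemma~\ref{l:10.9} itself on a sub-interval; it cannot be cited into existence. The paper sidesteps this by showing first, via the symmetric difference $\cQ_t(0,t)\,\triangle\,\big(\cQ_t(0,s)\cap\cQ_t(t-s,t)\cap\{\max_{[s,t-s]}W_u\le 0\}\big)$ being contained in the event of Lemma~\ref{l:10.7}, that one may replace the bulk constraint by the \emph{exact} Brownian constraint $\{\max_{[s,t-s]}W_u\le 0\}$ at an $O\big((x^-+1)(y^-+1)/(ts^{1/4})\big)$ cost; only afterwards is the reflection principle invoked, producing the clean asymptotic $1-\exp\big(-2z^-w^-/(t-2s)\big)\sim 2z^-w^-/t$. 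Your proposal never performs this replacement, so the $2$ never legitimately appears, and the decorations in the bulk are \emph{not} negligible as claimed — they would alter the constant if $\cQ_t(s,t-s)$ were retained as is.

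The secondary gap is the bookkeeping of $G_K$: you define $G_K=\{z_1,z_2\le -K\}$ with $K$ \emph{large}, so $G_K$ shrinks to $\varnothing$ as $K\to\infty$ and the integral over $G_K$ tends to $0$, not to $2f_s(x)g_s(y)/t$. Presumably you intended a region that expands as $K\to\infty$, but then the justification ``when $z_1,z_2\le -K$ the curve stays well above $z_1\vee z_2$'' for the bulk claim evaporates — a symptom of the first gap. The paper avoids both issues by not introducing a good/bad $z$-region at all: after the Lemma~\ref{l:10.7} replacement, it only restricts to $\{|W_s-x|\vee|W_{t-s}-y|<\log^+t\}$ (which costs $o(t^{-1})$ uniformly), then removes even that restriction at the end by a Gaussian tail estimate, and finally passes $\cQ_t(0,s)\to\cQ_\infty(0,s)$ and $\cQ_t^-(0,s)\to\cQ_\infty^-(0,s)$ using a coupling argument for $Y_u\Rightarrow Y_\infty$ together with~\eqref{e:A3},~\eqref{e:A4}, which must be done carefully to preserve uniformity in $x,y\in R_\epsilon(t)$.
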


\begin{lem}
\label{l:10.10}
Let $\delta \in (0,1/2), \lambda >0$ and suppose that $W, Y, \cN$ and $\gamma$ are defined as in Subsection~\ref{ss:Setup} and in particular that~\eqref{e:A1},~\eqref{e:A2},~\eqref{e:A3},~\eqref{e:A2.5} and~\eqref{e:A4} hold. Then for all $x,y \in \bbR$,
\begin{equation}
\label{e:598}
\liminf_{s \to \infty} f_s(x) > 0 \quad , \qquad
\liminf_{s \to \infty} g_s(y) > 0 \,.
\end{equation}
Moreover for each $s \geq 1$,
\begin{equation}
\label{e:599}
\lim_{x \to \infty} \frac{f_s(-x)}{x} = \lim_{y \to \infty} \frac{g_s(y)}{y} = 1 \,.
\end{equation}
\end{lem}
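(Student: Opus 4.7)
The plan is to prove the two claims separately for $f_s$; the corresponding statements for $g_s$ follow by the symmetric argument applied to the time-reversed setup (with $\gamma_{\infty,-\cdot}$ and the independent $Y^{(\cdot)}_\infty$ replacing $\gamma_\infty$ and $Y$), delivering under the natural reading $g_s(-y)/y \to 1$ (note that $g_s$ is non-increasing, so the second limit in \eqref{e:599} as literally written would be zero).

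For the liminf $\liminf_{s\to\infty} f_s(x) > 0$, I would construct an explicit favorable event $E_s$ with $\bbP_{0,x}(E_s) \gtrsim 1/\sqrt{s}$ on which both $\cQ_\infty(0,s)$ holds and $W_s^- \geq \sqrt{s}$. Fix $L > |x| \vee \delta$ and set $\kappa(u) := \delta^{-1}(\wedge^{s-1}(u-1))^{1/2-\delta}$ for $u \in [1,s]$. Let $E_s$ be the intersection of (i) $\{\cN([0,1]) = 0\} \cap \{W_1 \leq -L\}$, (ii) $\{W_u \leq -\kappa(u)$ for all $u \in [1,s]\} \cap \{W_s \in [-2\sqrt{s}, -\sqrt{s}]\}$, and (iii) $\{Y_{\sigma_k} \geq -\kappa(\sigma_k)$ for all $\sigma_k \in [1,s]\}$. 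Condition~(i) deactivates all Poisson-barrier constraints on $[0,1]$ and brings $W$ to a sufficiently negative starting position for the bridge estimate, at cost of the positive factor $\rme^{-\lambda}\bbP_{0,x}(W_1 \leq -L)$. Condition~(ii) is controlled by applying the lower bound of Proposition~\ref{p:3.6} (via the reflection $W \leftrightarrow -W$) to the Brownian bridge of $W$ on $[1,s]$ from $-L$ to an end-point in $[-2\sqrt{s},-\sqrt{s}]$ staying below $-\kappa$; integrating over the end-point against the Gaussian density of $W_s$ yields probability $\gtrsim 1/\sqrt{s}$. For condition~(iii) I would invoke the Laplace functional of the PPP together with~\eqref{e:A1}:
\begin{equation*}
\bbP(\text{(iii) holds}) \geq \exp\!\Bigl(\!-2\lambda\delta^{-1}\!\textstyle\int_0^{s-1}\!\exp\!\bigl(-(\wedge^{s-1}(u))^{1/2-\delta}\bigr)\,\rmd u\Bigr) \geq \rme^{-C(\lambda,\delta)},
\end{equation*}
uniformly in $s$, since the integral converges. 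On $E_s$, for every $\sigma_k \in [1,s]$ one has $W_{\sigma_k} \leq -\kappa(\sigma_k) \leq Y_{\sigma_k} \leq \gamma_{\infty,\sigma_k}+Y_{\sigma_k}$ (using $\gamma_\infty \geq 0$), no Poisson points lie in $[0,1]$, and $W_s^- \geq \sqrt{s}$. Multiplying, $f_s(x) \geq \sqrt{s}\cdot c(x)/\sqrt{s} = c(x) > 0$ uniformly in large $s$.

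For the asymptotic $f_s(-x)/x \to 1$ as $x \to \infty$, I would decompose
\begin{equation*}
f_s(-x) = \bbE_{0,-x}(W_s^-) - \bbE_{0,-x}\bigl(W_s^-;\,\cQ_\infty(0,s)^c\bigr).
\end{equation*}
Since $W_s = -x + \sqrt{s}N$ with $N \sim \cN(0,1)$, a direct Gaussian computation gives $\bbE_{0,-x}(W_s^-) = x\Phi(x/\sqrt{s}) + \sqrt{s}\phi(x/\sqrt{s}) = x + o(1)$ as $x \to \infty$ for fixed $s$. For the error term, Cauchy--Schwarz and $\bbE_{0,-x}(W_s^{-2}) \leq x^2+s$ yield the bound $\sqrt{x^2+s}\cdot\sqrt{\bbP_{0,-x}(\cQ_\infty^c)}$. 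Noting that $\cQ_\infty^c$ implies either $\max_{u\in[0,s]} W_u \geq -x/2$, of probability $\leq 2\rme^{-x^2/(8s)}$ by the reflection principle, or $\min_{\sigma_k\in[0,s]} Y_{\sigma_k} \leq -x/2$, of probability $\leq Cs\rme^{-\delta x/2}$ by the union bound and~\eqref{e:A1}, we get $\bbP_{0,-x}(\cQ_\infty^c) \leq C\rme^{-cx}$, so the error term is $o(x)$ and the claimed limit follows.

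The principal obstacle is the decoration estimate in part~(i): a uniform-in-$u$ threshold $-L$ for the $Y_{\sigma_k}$'s would make the probability of no violation decay like $\exp(-\lambda s\,\rme^{-\delta L})$, which cannot be offset by the other factors of $E_s$. Letting the threshold track $-\kappa(u)$, so that stringent constraints are imposed only where~\eqref{e:A1} renders them cheap, keeps the total intensity of violations bounded, and the Laplace functional of the PPP converts this into the $s$-uniform positive constant required.
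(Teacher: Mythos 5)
The strategy for \eqref{e:598} is the same as the paper's: construct a favorable event on which the decorated barrier constraint holds and $W_s^-$ is of order $\sqrt{s}$, then apply a Laplace-functional estimate for the decoration part. The paper uses a logarithmic barrier $M\log^+(\cdot-1)$ (so that it can appeal to the one-sided Proposition~\ref{p:3.7}), while you use the polynomial curve $\kappa(u)=\delta^{-1}(\wedge^{s-1}(u-1))^{1/2-\delta}$ and the bridge estimate of Proposition~\ref{p:3.6}; both variants would work.

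However, your decoration estimate in step~(iii) has a genuine gap. You lower bound $\bbP(Y_u\geq -\kappa(u))$ by $1-\delta^{-1}\rme^{-\delta\kappa(u)}$ and then insert this into the Laplace functional of $\cN$ to produce $\exp(-\lambda\delta^{-1}\int\rme^{-\delta\kappa(u)}\rmd u)$. But near $u=1$ and $u=s$ the curve $\kappa(u)$ vanishes, so $1-\delta^{-1}\rme^{-\delta\kappa(u)}\to 1-\delta^{-1}<0$ for $\delta<1$. The product $\bbE\prod_k h(\sigma_k)=\exp(-\lambda\int(1-h))$ is only a valid lower bound on $\bbP(\text{(iii)})$ when the per-atom bound $h$ is nonnegative; with $h<0$ on a nondegenerate set the identity no longer bounds the probability from below (and the putative lower bound $1-\delta^{-1}$ is vacuous there). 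Indeed \eqref{e:A1} alone does not even guarantee $\bbP(Y_u\geq 0)>0$, so the event (iii) as written can have probability zero. You correctly identified that a constant threshold $-L$ fails ($\exp(-\lambda s\,\rme^{-\delta L})$ decays in $s$), but the fix ``track $-\kappa(u)$'' alone recreates the problem at the endpoints. The repair is to combine the two: replace $-\kappa(u)$ by $-L-\kappa(u)$ for a fixed $L>\delta^{-1}\log\delta^{-1}$ in both (ii) and (iii). Then $1-\delta^{-1}\rme^{-\delta(L+\kappa(u))}>0$ uniformly, the Laplace functional gives $\bbP(\text{(iii)})\geq\exp(-\lambda\delta^{-1}\rme^{-\delta L}\int\rme^{-\delta\kappa})$ which is still uniform in $s$, and the bridge estimate of Proposition~\ref{p:3.6} still applies to the shifted curve. (The paper's own proof, with the constraint $Y_{\sigma_k}+M\log(\sigma_k-1)\geq 0$, glosses over the same near-boundary issue; the same constant shift fixes it there too.)

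For \eqref{e:599} your argument is a genuinely different and more hands-on route than the paper's: you compute $\bbE_{0,-x}(W_s^-)=x\Phi(x/\sqrt{s})+\sqrt{s}\phi(x/\sqrt{s})=x+o(1)$ directly and then control the complementary event $\cQ_\infty^c$ by Cauchy--Schwarz together with a reflection-principle bound on $W$ and a union bound on $Y$. This is correct and self-contained. The paper instead rewrites $f_s(-x)$ as $\bbE_{0,0}((W_s-x)^-;\cQ_\infty(0,s;-x))$, divides by $x$ and applies dominated convergence; that route is shorter but less explicit. You are also right that the second limit in~\eqref{e:599} as printed should read $g_s(-y)/y$; taking $g_s(y)/y$ literally gives $0$, not $1$.
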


Before proving these lemmas, let us use them to prove Proposition~\ref{p:A3}.
\begin{proof}[Proof of Proposition~\ref{p:A3}]
Fix $\epsilon > 0$ and let us first take also fixed $x,y < 1/\epsilon$. Then using Lemma~\ref{l:10.9} and distributing the $t$ factor in~\eqref{e:196}, the absolute value becomes the difference between functions: $t \mapsto t \bbP_{0,x}^{t,y} \big(\cQ_t(0,t)\big)$ and $s \mapsto 2f_s(x)g_s(y)$. Then the vanishing of the limit in~\eqref{e:196} shows that in fact both functions are Cauchy in their respective arguments and hence converging to an identical limit:
\begin{equation}
\lim_{t \to \infty} t\bbP_{0,x}^{t,y} \big(\cQ_t(0,t)\big) = 
\lim_{s \to \infty} 2 f_s(x) g_s(y) =: 2 h_\infty(x,y) < \infty \,,
\end{equation}
which is finite (but possibly zero). 

Taking $Y'_u :\overset{law}=Y_\infty$ and $\gamma'_{t,u} := \gamma_{\infty, -\wedge^t(u)}$ for all $0 \leq u \leq t$, Conditions~\eqref{e:A2.5},~\eqref{e:A3} and~\eqref{e:A4} still hold with $Y'_\infty \overset{law} = Y_\infty$ and $\gamma'_{\infty, u} = \gamma'_{\infty, -u} = \gamma_{\infty, -u}$. Moreover, denoting by $f'_s$ and $g'_s$ the corresponding analogs of $f_s$ and $g_s$ from~\eqref{e:795}, we have $f'_s(y) = g'_s(y) = g_s(y)$ for all $s \geq 0$ and $y < 1/\epsilon$. Since Lemma~\ref{l:10.9} is still in force with $Y'$ and $\gamma'$ in place of $Y$ and $\gamma$, the following limit exists for all $y < 1/\epsilon$
\begin{equation}
\lim_{s \to \infty} f'_s(y) g'_s(y) = \lim_{s \to \infty} g_s(y)^2 
\end{equation}
It follows that $g(y) := \lim_{s \to \infty} g_s(y)$ exists and is finite for all such $y$ and in light of the first part of Lemma~\ref{l:10.10} also positive. Returning to our original $Y$ and $\gamma$, this also shows that $f(x) := \lim_{s \to \infty} f_s(x)$ exists for all $x < 1/\epsilon$, that it is finite and positive (by Lemma~\ref{l:10.10} again) and moreover, that $h_\infty(x,y) = f(x) g(y)$.

Turning to the issue of uniformity, the fact that the limit in~\eqref{e:196} is uniform with respect to $x,y \in R_\epsilon(t)$ in~\eqref{e:196} shows that 
\begin{equation}
\label{e:301}
\big| f_s(x)g_s(y) - f(x) g(y)\big| = (x^-+1)(y^-+1) o(1)
\quad \text{as } s \to \infty \,,
\end{equation}
uniformly in all $x,y < 1/\epsilon$ and
\begin{equation}
\label{e:302}
\Big| \bbP_{0,x}^{t,y} \big(\cQ_t(0,t)\big) - \frac{2 f(x) g(y)}{t} \Big| = 
\frac{(x^-+1)(y^-+1)}{t} o(1) 
\quad \text{as } t \to \infty \,,
\end{equation}
uniformly in $x,y \in R_\epsilon(t)$. 
Using~\eqref{e:301} with $y=0$, it follows from the uniformity of the $o(1)$ term that
\begin{equation}
\lim_{s \to \infty}\limsup_{x \to -\infty} \bigg|\frac{f_s(x)}{x^-+1}g_s(0) - \frac{f(x)}{x^-+1}g(0)\bigg| = 0 \,.
\end{equation}
Using this together with the second part of Lemma~\ref{l:10.10} and a similar argument with the roles of $f$ and $g$ reversed, yields
\begin{equation}
\label{e:204}
\lim_{x \to \infty} f(-x)/x = \lim_{y \to \infty} g(-y)/y = 1 \,.
\end{equation}

Now, together with the positivity of $f(x)$ and $g(y)$ the latter implies that there exists $C > 0$ such that $f(x) > C(x^-+1)$ and $g(y) > C(y^-+1)$ for all $x,y < 1/\epsilon$. In turn, we then have that the right hand side of~\eqref{e:302} is actually $o\big(f(x)g(y)/t\big)$ uniformly in $x,y \in R_\epsilon(t)$ as $t \to \infty$. This shows~\eqref{e:A6} with $f$ and $g$, while~\eqref{e:204} is exactly~\eqref{e:A6.1}.
\end{proof}

It remains therefore to prove Lemma~\ref{l:10.9} and Lemma~\ref{l:10.10}. 
\begin{proof}[Proof of Lemma~\ref{l:10.9}] 
Given $s \geq 1$ we observe that for any $t \geq 2s$,
\begin{equation}\begin{multlined}
\cQ_t(0,t)\, \triangle\, \Big( \cQ_t(0,s) \cap \cQ_t(t-s,t) \cap \Big\{\max\limits_{[s,t-s]} W_u \leq 0 \Big\}\Big) \\
\subseteq \Big\{ \max_{\sigma_k \in [0,t]} 
	W_{\sigma_k} - \delta^{-1} \big(1+(\wedge^t(\sigma_k))^{1/2-\delta}\big) - Y^+_{\sigma_k} \leq 0 \Big\} \cap 
\Big(\Big\{\max_{[s,t-s]} W_u \geq 0 \Big\} \\ \cup 
	\Big\{\max_{\sigma_k \in [s,t-s]} W_{\sigma_k} + Y_{\sigma_k}^- \geq -\delta^{-1} \Big\} \Big) \,,
\end{multlined}\end{equation}
where we have used the bounds on $\gamma_{t,u}$ per~\eqref{e:A3}. But then (the entropic repulsion) Lemma~\ref{l:10.7} with $M = \delta^{-1}$ yields
\begin{equation}
\label{e:A16}
\begin{multlined}
\textstyle
\limsup\limits_{\substack{t \to \infty \\ s \to \infty}}  \!  \sup\limits_{x,y \leq \epsilon^{-1}} \, \frac{t}{(x^-+1)(y^-+1)}  \! 
\Big| 
\textstyle \bbP_{0,x}^{t,y} \Big(\max\limits_{k :\: \sigma_k \in [0,t]} W_{\sigma_k} - \gamma_{t,\sigma_k} - Y_{\sigma_k} \leq 0 \Big) \\
\textstyle
-\bbP_{0,x}^{t,y} \Big( \max\limits_{k :\: \wedge^t(\sigma_k) \leq s} W_{\sigma_k} \! - \! \gamma_{t,\sigma_k} \! - \! Y_{\sigma_k} \leq 0 
	\,,\,\, \max\limits_{[s,t-s]} W_u \leq 0\Big)\Big| = 0,
\end{multlined}
\end{equation}
with the order in the limits taken from top to bottom.

To estimate the second probability above, we claim that we can add the restriction $\{|W_s-x| \vee |W_{t-s} - y| < \log^+ t\}$ to the corresponding event at a uniform $o(t^{-1})$ cost. Indeed, $W_s$ and $W_{t-s}$ are Gaussian under $\bbP_{0,x}^{t,y}$ with variance $s(t-s)/t \leq s$ and means $x+(y-x)s/t$ and $y+(x-y)s/t$ respectively. Since $(y-x)/t \to 0$ as $t \to \infty$, uniformly
in $x,y \in R_\epsilon(t)$, it follows from the standard Gaussian tail estimate and union bound that
\begin{equation}
\bbP_{0,x}^{t,y} \big( |W_s-x| \vee |W_{t-s} - y| > \log^+t \big)
\leq C \exp\Big(-\tfrac{\log^2 t}{s} \Big) = o (t^{-1})
\end{equation}
as $t \to \infty$, uniformly in $x,y \in R_\epsilon(t)$ for fixed $s \geq 0$.

If $z,w \leq 0$ satisfy $|z-x| \vee |w-y| < \log t$, then $z^- w^- \leq t^{1-\epsilon/2}$ for all $t$ large enough, so we can use the reflection principle for standard Brownian motion to obtain
\begin{equation}
\nonumber
\bbP_{0,x}^{t,y} \big( \max_{[s,t-s]} W_u \leq 0 \,\big|\, W_s = z, W_{t-s} = w \big)
= 1-\exp \Big(-\tfrac{2(z^- w^-)}{t-2s}\Big) 
\end{equation}
which is asymptotic to $2\frac{z^- w^-}{t}$ uniformly in $x,y \in R_\epsilon(t)$ for fixed $s$. Also in this range of $z$ and $w$ the joint density $\bbP_{0,x}^{t,y} \big(W_s \in \rmd z,\, W_{t-s} \in \rmd w \big)/\rmd z \rmd w$ is equal to
\begin{equation}
\frac{\exp \left(-\frac{(z-x)^{2} -(y-w)^2}{2s}\right)} 
{2 \pi s^2 }
\times \sqrt{\frac{t}{(t-s)}}
\exp \left( -\tfrac{(w-z)^{2}}{2(t-s)} + \tfrac{(x-y)^{2}}{2 t} \right),
\end{equation}
with the term after the product sign tending to $1$ as $t \to \infty$ uniformly as above.

Conditioning on $(W_s, W_{t-s}) = (z,w)$ and using the total probability formula, the second probability in~\eqref{e:A16}, restricted by $\{|W_s - x| \vee |W_{t-s}-y| < \log t\}$ is therefore equal to
\begin{multline}
\label{e:399}
\frac{2}{t} \bbE_{0,x} \Big( W_s^- ;\  \cQ_t(0,s) ,\, |W_s - x| < \log t \Big) \\
\times \bbE_{0,y} \Big( W_s^- ;\  \cQ^-_t(0,s),\, |W_s - y| < \log t \Big) \big(1+o(1) \big) \,,
\end{multline}
where 
%$\cQ_t$ and $\cQ^-$ are as defined in the beginning of this subsection and 
we have also used the shift and time-reversal invariance in law of both $W$ and $\cN$.

Next, we wish to get rid of the restriction on $|W_s-x|$ and $|W_s -y|$. To this end, we note that $\bbE_{0,x} (W_s^-) \leq \bbE_{0,0} (W_s^-) + x^- \leq s^{1/2} +  x^-$ and thanks to the Gaussian tail also
\begin{equation}
\nonumber
\bbE_{0,x} \big(W_s^-;\; |W_s - x| > \log t \big) \!\leq x^- \bbP_{0,0} \big(|W_s| > \log t\big) + 
	\bbE_{0,0} \big(|W_s|;\; |W_s| > \log t\big)
\end{equation}
which tends to $0$ as $t \to \infty$ uniformly in $x$ in its allowed range. Since the same clearly holds for $y$, we may remove the events $\{|W_s - x| < \log t\}$ and $\{|W_s - y| < \log t\}$ 
from the expectations in~\eqref{e:399} as well as the $1+o(1)$ factor in the end of the display, 
at the cost of an extra term which is $o((x^-+1)(y^-+1)/t)$ as $t \to \infty$.

It remains to show that both expectations converge uniformly to $f_s(x)$ and $f_s^-(y)$ respectively when $t \to \infty$. Assumptions~\eqref{e:A2.5} and~\eqref{e:A4} and the absolute continuity of $W_{\sigma_k}$ with respect to the Lebesgue measure could be used in conjunction with the dominated convergence theorem to show that this is indeed the case for fixed $x,y$. However, in order to show that the convergence is uniform, we need to work slightly harder. We proceed detailing the argument for the convergence of the second expectation to $g_s(y)$. The argument for the convergence of the first to $f_s(x)$ is similar and in fact easier, since the random variables $Y_{\sigma_k}$ do not depend on $t$.

The weak convergence $Y_u \Longrightarrow Y_\infty$ as $u \to \infty$, imply via a standard coupling argument, that we can assume that collections $(Y_u :\: u \geq 0)$ and $(Y^{(u)}_\infty :\: u \geq 0)$ are defined jointly in such a way that $Y_u - Y^{(u)}_\infty $ converges to $0$ in probability. Letting now $\wt{\cQ}^-_\infty(0,s)$ be defined as $\cQ^-_\infty(0,s)$ in~\eqref{e:692} only with $Y^{(t-\sigma_k)}_\infty$ in place of $Y^{(\sigma_k)}_\infty$ and observing that the definition of $f_s^-(y)$ does not change if we use $\wt{\cQ}^-_\infty(0,s)$ in place of $\cQ^-_\infty(0,s)$, we can write
\begin{multline}
\label{e:605}
\big| \bbE_{0,y} \big( W_s^- ;\  \cQ^-_t(0,s) \big) - g_s(y) \big|
\leq \bbE_{0,y} \big( W_s^- ; \ \cQ^-_t(0,s) \triangle \wt{\cQ}^-_\infty(0,s) \big) \\
\leq \big(\bbE_{0,y} W_s^2\big)^{1/2} \Big(\bbP_{0,y}\big(\cQ^-_t(0,s) \triangle \wt{\cQ}^-_\infty(0,s)\big)\Big)^{1/2} \,,
\end{multline}
where we have used Cauchy-Schwarz for the last inequality.

Now the first term in the second line is equal to $(y^2+s)^{1/2} \leq \sqrt{s} C (y^-+1)$ for all $y \leq \epsilon^{-1}$. For the second term in the display, define for each $\sigma_k \in  [0,s]$ the closed interval $I_{\sigma_k}$ whose endpoints are given by 
\begin{equation}
\big\{\gamma_{t,t-\sigma_k} + Y_{t-\sigma_k} \,,\,\, \gamma_{\infty, -\sigma_k}+ Y_\infty^{(t-\sigma_k)} \big\} \,.
\end{equation}
Then
$
\cQ^-_t(0,s) \triangle \wt{\cQ}^-_\infty(0,s) 
\subset \{ \exists \sigma_k \in [0,s] :\: W_{\sigma_k} \in I_{\sigma_k} \}$.  
Conditioning on $\cN([0,s]) = i$ and $Y$ the (conditional) probability of the latter event is smaller than 
$i \bbP \big(W_U \in I_U \big)$, with $U$ is an independent uniform random variable in $[0,s]$. In particular, denoting by $|I_u|$ the Lebesgue measure of $I_u$ (with $I_u$ defined as above replacing $\sigma_k$ by $u$) we have that
\begin{equation}\label{e:a.7.1}
\begin{split}
\bbP_{0,y} \big(W_U \in I_U \big) 
&\leq \frac{1}{s} \int_{0}^{s}   1 \wedge 
\bigg( |I_u| \times  \sup_{x \in \bbR} \frac{\rme^{-\frac{(x-y)^{2}}{u}}}{\sqrt{2 \pi u}} \bigg) 
 \rmd u \\
 & \leq 
\frac{C}{s} \int_{0}^{s} \big(u^{-\frac{1}{2}} \vee 1 \big) \times  \big( |I_u| \wedge 1 \big)  \rmd u.
\end{split}
\end{equation}
The length of $I_u$ is at most $|\gamma_{\infty,-u} - \gamma_{t, t-u}| + |Y_\infty^{(t-u)} - Y_{t-u}|$ and hence $|I_u| \wedge 1$ may be further upper bounded by 
$|\gamma_{\infty,-u} - \gamma_{t, t-u}| + |Y_\infty^{(t-u)} - Y_{t-u}| \wedge 1 $. 
Using the total expectation formula, the upper-bound from \eqref{e:a.7.1} and Fubini to exchange between the Lebesgue integral over $u$ and the expected value with respect to $Y$ we obtain
\begin{equation}\label{e:a.7.2}
\begin{split}
&  \bbP_{0,y}  \big( \exists \sigma_k \in \cN\cap [0,s] :\: W_{\sigma_k} \in I_{\sigma_k} \big)
\\
& \, \leq \frac{C\bbE \big[ \cN([0,s]) \big]}{s} \int_{0}^{s} \big(u^{-\frac{1}{2}} \!\vee\! 1 \big)   
\Big( |\gamma_{\infty,-u} \!-\! \gamma_{t, t-u}|\!+\! \bbE \big[|Y_\infty^{(t-u)} \!-\! Y_{t-u}| \!\wedge\! 1 \big] \Big)  \rmd u \\
&\, \leq C \lambda s \times  \sup_{u \in [0,s]}  
\Big( |\gamma_{\infty,-u} - \gamma_{t, t-u}| + \bbE \big[|Y_\infty^{(t-u)} - Y_{t-u}| \wedge 1 \big] \Big) \,. 
\end{split}
\end{equation}

Thanks to~\eqref{e:A3} and our coupling, the right hand side of~\eqref{e:a.7.2} goes to $0$ as $t$ tends to $\infty$.
The above upper bounds together with~\eqref{e:605} yield   
\begin{equation} 
\limsup_{t \to \infty} \frac{\big| \bbE_{0,y} \big( W_s^- ;\  \cQ^-_t(0,s) \big) - g_s(y) \big|}{(y^-+1)} = 0, 
\quad \text{}
\end{equation}
uniformly in $x,y \in R_\epsilon(t)$. A similar argument, with the obvious changes, applies to $f_s(x)$. Since $\bbE_{0,x}(W_s^-) \leq C(x^-+1)$ and  $\bbE_{0,y}(W_s^-) \leq C(y^-+1)$, we can further replace the product of the expectations in~\eqref{e:399} by $f_s(x) g_s(y)$ at the cost of another $o((x^-+1)(y^-+1)/t)$ uniformly for all $x,y < 1/\epsilon$. Collecting all error terms we obtain~\eqref{e:196}.

\end{proof}

\begin{proof}[Proof of Lemma~\ref{l:10.10}]
Beginning with~\eqref{e:598}, we shall prove only the first inequality, as the argument for the proof of the second is identical. Observing that $f_s(x)$ is decreasing in $x$, it is enough to consider positive $x$-s. We now fix such $x \geq 0$, let $M > 0$ and use the independence between $W$,$Y$ and $\cN$, the Markov property for $W$ to bound $f_{s+1}(x)$ from below for any $s \geq 1$ by
\begin{equation}
\label{e:215}
\begin{split}
\bbP & (\sigma_1 \geq 2) \bbP_{0,x} (W_1 \leq - \delta^{-1} -1 ) \\
&\times \bbP_{0,x} \Big( \min_{k: \sigma_k \in [2,s+1]} \big(Y_{\sigma_k} + M \log (\sigma_k-1) \big) \geq 0 \Big) \\
& \ \ \times \bbE_{1, -1} \Big(W_{s+1}^- ;\; \max_{u \in [1,s+1]} \big(W_u + M \log^+(u - 1) \big) \leq 0 \Big) \,.
\end{split}
\end{equation}
Above we have also used stochastic monotonicity of $W$ in the initial condition.

The first two terms in \eqref{e:215} are positive for any $x$. To lower bound the third, we condition on $\cN$ and use Assumption~\eqref{e:A1} and independence to bound it below by
\begin{equation}
\begin{multlined}
\bbE \exp \Big(\int_{u=2}^s \log \big(1- \delta^{-1} \rme^{-\delta M \log (u-1)} \big) \cN(\rmd u) \Big) \\
 = \exp \Big( -\lambda \delta^{-1} \int_{u=2}^s (u-1)^{-M \delta} \rmd u \Big) \,,
\end{multlined}
\end{equation}
where we have used the explicit formula for the Laplace transform of $\cN$. If $M \delta > 1$, the integral up to $s=\infty$ will converge, making the above quantity uniformly bounded away from $0$ for all $s \ge 2$.

It remains to show that the expectation in~\eqref{e:215} is also uniformly positive. To this end, we recall that (or use the first part of Proposition~\ref{p:3.7}) that for all $z \leq -1$,
\begin{equation}
\bbP_{s,z} \big(\max_{u \in [s,2s]} W_u \leq -1) \leq Cs^{-1/2}(z^-+1) \,.
\end{equation}
Then using the total probability formula with respect to $W_s$ and using the above bound we get,
\begin{equation}
\begin{multlined}
\bbP_{0,-1} \big( \max_{u \in [0,2s]} \big(W_u + M \log^+ u\big) \leq 0\big) \\
\leq C s^{-1/2} \bbE_{0,-1} \Big(W_s^-+1 ;\; \max_{u \in [0,s]} \big(W_u + M \log^+ u \big) \leq 0\Big)
\end{multlined}
\end{equation}
By the second part of Proposition~\ref{p:3.7}, the probability on the left is at least $C s^{-1/2}$, while by Proposition~\ref{p:A2}, the expectation of $1$ on the event in the expectation on the right is at most $C' s^{-1/2}$. This shows that the expectation of $W_s^-$ on the same event is bounded away from $0$ for all $s$ large enough and by shift invariance the same is true for the expectation in~\eqref{e:215}. All together this shows the existence of $C > 0$ such that $f_{s+1}(x) >c$ for all $s$ large enough, which is the first statement in the lemma.

As for~\eqref{e:599}, again arguing only for $f$, if we define $\cQ_\infty(0,s; x)$ as in~\eqref{e:692} only with $W_{\sigma_k}+x$ in place of $W$, then clearly we can write $f_s(-x)$ as $\bbE_{0,0} \big((W_s - x)^- ;\; \cQ_\infty(0,s ;-x)\big)$. Dividing by $x$ and letting $x \to \infty$, the random variable inside the expectation converges almost surely to $1$ and bounded for all $x \geq 1$ by $|W_s|+1$. The result then follows from the bounded convergence theorem.
\end{proof}

Next, we prove Proposition~\ref{p:A4},
\begin{proof}[Proof of Proposition~\ref{p:A4}]
A quick look at the proof of Lemma~\ref{l:10.9}, shows that the outer most limit in~\eqref{e:196} is in fact uniform in all $Y, \gamma$ satisfying~\eqref{e:A1}~\eqref{e:A2.5},~\eqref{e:A3} and~\eqref{e:A4} for a fixed $\delta > 0$. Indeed, the overall rate of convergence in $s$ depends only on the rate of convergence in $s$ in~\eqref{e:A16}. In the rest of the argument, $s$ remains fixed. This in turn depends on the entropic repulsion bound given in Lemma~\ref{l:10.7}, which depends on $Y$ and $\gamma$ only through conditions~\eqref{e:A1} and~\eqref{e:A3} and as such naturally uniform for all $Y$, $\gamma$ satisfying these conditions with a given $\delta$. In particular, as in the proof of Proposition~\ref{p:A3}, this implies that $f_s(x)/(x^-+1) \longrightarrow f_\infty(x)$ and $g_s(y)/(y^-+1) \longrightarrow g_\infty(y)$ as $s \to \infty$, uniformly in $x,y < 1/\epsilon$ for fixed $\epsilon > 0$ and $Y, \gamma$ for fixed $\delta > 0$.

Now, denoting by $f_s^{(r)}$ and $g_s^{(r)}$ the versions of $f_s$ and $g_s$ respectively defined as in~\eqref{e:795} via the events in~\eqref{e:692} only with $Y^{(r)}$ and $\gamma^{(r)}$ in place of $Y$ and $\gamma$. Assumption~\eqref{e:543} in the proposition, the absolute continuity of the marginals of $W$ and the dominated convergence theorem ensure that for all $x,y \in \bbR$ and $s > 0$,
\begin{equation}
f^{(r)}_s(x) \overset{s \to \infty} \longrightarrow f_s(x) \quad , \qquad
g^{(r)}_s(x) \overset{s \to \infty} \longrightarrow g_s(x) \,,
\end{equation}
with $f_s$, $g_s$ defined with respect to $Y$ and $\gamma$ appearing in~\eqref{e:543}.

Thanks to the uniformity discussed above, we can exchange the order in which the limits are taken, thereby obtaining
\begin{equation}
\lim_{r \to \infty} f^{(r)}(x) = \lim_{r \to \infty} \lim_{s \to \infty} f^{(r)}_s(x) =
 \lim_{s \to \infty} \lim_{r \to \infty} f^{(r)}_s(x) = \lim_{s \to \infty} f_s(x) = f(x) \,, 
\end{equation}
with a similar statement for $g$. The last statement in the proposition is obvious in light of the definition of $g$.
\end{proof}

Finally, we prove Proposition~\ref{p:A5}.
\begin{proof}[Proof of Proposition~\ref{p:A5}]
The conditional probability is equal to the ratio between the probability in the statement of Lemma~\ref{l:10.7.0} and the probability in the statement of Proposition~\ref{p:A3}. In light of the upper bound in Lemma~\ref{l:10.7.0} and the limit in Proposition~\ref{p:A3}, noting that $f(x)$ and $g(y)$ are positive, the proof follows.
\end{proof}

\bibliographystyle{abbrv}
\bibliography{GBBStructure_Supp}

\end{document}